\documentclass[11pt]{article}
\usepackage{fontenc}
\usepackage{inputenc}
\usepackage{authblk}
\usepackage{mathtools}

\mathtoolsset{showonlyrefs}
\usepackage{amsmath}
\usepackage{amssymb}
\usepackage{amsthm}
\usepackage{subfigure}
\usepackage{caption2}
\usepackage{array}
\usepackage{xy}
\usepackage[pdftex]{graphicx}
\usepackage{hyperref}
\usepackage{color}
\usepackage{transparent}
\usepackage{latexsym}
\usepackage{float}
\usepackage{mathrsfs}
\usepackage{amssymb,url}

\textwidth = 6.0 in
\textheight = 8.5in
\oddsidemargin = 0.30 in
\evensidemargin = 0.30 in
\voffset=-30pt

\makeatletter
\@addtoreset{equation}{section}
\makeatother

\newtheorem{lemma}{Lemma}[section] 
\newtheorem{theorem}[lemma]{Theorem} 
 
\newtheorem{claim}[lemma]{Claim} 
 
\newtheorem{definition}[lemma]{Definition}

\newtheorem{remark}[lemma]{Remark} 

\newcommand{\Vol}{{\mbox{Vol}}}

\bibliographystyle{plain}

\usepackage{hyperref}

\usepackage{xcolor}
%\definecolor{backgroundcolor}{RGB}{199, 238, 206}
%\pagecolor{backgroundcolor}

	\title{\bf Nonuniqueness of solutions to the $L_p$ chord Minkowski problem}
\begin{document}
	\author{  Yuanyuan Li
		\\  \small School of Mathematical Sciences, University of Science and Technology of China,\\
		\small Hefei, 230026, China \\ \small  E-mails:
		lyuanyuan@mail.ustc.edu.cn
	}
	\date{}
	\maketitle
	
	\begin{abstract}This paper explores the nonuniqueness of solutions to the $L_p$ chord Minkowski problem for negative $p.$ The $L_p$ chord Minkowski problem was recently posed by Lutwak, Xi, Yang and Zhang, which seeks to determine the necessary and sufficient conditions for a given finite Borel measure  such that it is the $L_p$ chord measure of a convex body, and it includes the chord Minkowski problem and the $L_p$ Minkowski problem. 
	\end{abstract}

	\baselineskip=16.4pt
	\parskip=3pt
	
	\section{\bf Introduction}
	\ \ \ \ \ 
	The central objects in study of convex geometry are convex bodies. A convex body in $n$-dimensional Euclidean space $\mathbb{R}^n,$ is a compact convex set with non-empty interior. The Brunn-Minkowski theory is a study of convex bodies which centers around the study of geometric functionals and the differential of these functionals. When geometric invariants arise as geometric functionals of convex bodies, geometric measures are often viewed as differentials of geometric invariants. One of the cornerstones of the Brunn-Minkowski theory is the Minkowski problem. It is a problem of priscribing geometric measure generated by convex bodies, which is concerned about necessary and sufficient conditions for a given measure such that it arises as the measure generated by a convex body. The most studied Minkowski-type problem is the classical Minkowski problem, which focuses on the surface area measures of convex bodies. For a comprehensive discussion on the Minkowski problem and its resolution, we recommend readers consulting  Pogorelov \cite{Pogorelov} and Cheng–Yau \cite{Cheng-Yau}.
	
	% When solution has good regularity, it's equivalent to solve a degenerate fully non-linear partial differential equation.
	Recently, a new family of geometric measures were introduced by Lutwak-XYZ\cite{LXZY2022} by studying of a variational formula regarding intergral geometric invariants of convex bodies called chord integrals. Let $K \in \mathcal{K}^n$ where $\mathcal{K}^n:=\{\text{all convex bodies in } \mathbb{R}^n\},$ the $q$th chord integral $I_q(K)$ is defined by
	\begin{equation}\label{integral}
	I_q(K)=\int_{\mathcal{L}^n}|K\cap\ell|^q d\ell,
	\end{equation}
	where $\mathcal{L}^n$ denotes the Grassmannian of 1-dimensional affine subspace of $\mathbb{R}^n,$ $|K\cap \ell|$ denotes the length of the chord $K\cap \ell,$ and the integration is with respect to Haar measure on the affine Grassmannian $\mathcal{L}^n,$ which is normalized to be a probability measure when restricted to rotations and to be $(n-1)$-dimensional Lebesgue measure when restricted to parallel translations.
	$$
	I_1(K)=V(k),\quad  I_0(K)=\frac{\omega_{n-1}}{n\omega_n}S(K), \quad I_{n+1}(K)=\frac{n+1}{\omega_n}V(K)^2,
	$$
	where $\omega_n$ denotes the volume of $n$-dimensional unit ball. Note that $I_q(B_n)=\frac{2^q\omega_n\omega_{n+q-1}}{\omega_{q}},$ where $B_n$ is the n-dimensional unit ball.  One can see from the above fomula that the chord integrals include the convex body's volume and surface area as two special cases. These are Crofton’s volume formula, Cauchy’s integral formula for surface area, and the Poincar\'{e}-Hadwiger formula,
	respectively (see [\cite{D. Ren}, \cite{Santalo}]).
	
	The chord measures and the Minkowski problems associated with chord measures were posed in \cite{LXZY2022}. They showed that the chord measures are the differentials of chord integrals and completely solved the chord Minkowski problem except for the critical case of the Christoffel-Minkowski problem. The $q$th chord measure is a finite Borel measure on $\mathbb{S}^{n-1}$ defined by
	\begin{equation}\label{measure1}
	F_q(K,\eta)=\frac{2q}{\omega_n}\int_{\nu^{-1}_K(\eta)}\tilde{V}_{q-1}(K,z)d \mathcal{H}^{n-1}(z),\text{   Borel }\eta\subset\mathbb{S}^{n-1},
	\end{equation}
	where $\widetilde{V}_{q-1}(K,z)$ is the $q-1$ th dual quermassintegral with respect to $z$.(See \eqref{dual}.)
	$$
	F_0(K,\cdot)=\frac{(n-1)\omega_{n-1}}{n\omega_n}S_{n-2}(K,\cdot), \quad F_1(K,\cdot)=S_{n-1}(K,\cdot),
	$$
	where $S_{i}(K,\cdot)$ is the $i$th order area measure of $K.$ Once chord measures are constructed, the $L_p$ chord measures follow naturally by extensions.
	For $K\in \mathcal{K}^n_{o}$ and $p\in \mathbb{R},$ the $L_p$ chord measures are defined by
	\begin{equation}\label{measure2}
	F_{p,q}(K,\eta)=\frac{2q}{\omega_n}\int_{\nu^{-1}_K(\eta)}(z\cdot\nu_{K}(z))^{1-p}\tilde{V}_{q-1}(K,z)d \mathcal{H}^{n-1}(z),\text{   Borel }\eta\subset\mathbb{S}^{n-1}.
	\end{equation}
	When $p=0,$ it is the cone-chord measure. When $q=1,$ $F_{p,1}(K,\cdot)$ is the $L_p$ surface area measure. When $q=0,$ $F_{p,0}(K,\cdot)$ is the $L_p-(n-2)$th area measure.
	
	The $L_p$-Minkowski problem was first formulated and studied by Lutwak in \cite{L1}. It has been rapidly attracting much attention; Lutwak introduced the important $L_p$ surface area measure and its associated Minkowski problem in the $L_p$ Brunn-Minkowski theory. Many cases of the $L_p$ Minkowski problem have been solved. The logarithmic Minkowski problem is one of the most central Minkowski type problems and is the problem of characterizing the cone-volume measure; see B$\ddot{o}$r$\ddot{o}$czky, Lutwak, Yang and Zhang [\cite{BLYZ13}, \cite{BLYZ12(5)}], Zhu [\cite{Zhu14},\cite{BHZ16}], Stancu [\cite{Stancu0240}, \cite{Stancu41}], Gage \cite{Gage18}, Xi and Leng \cite{XL43}, Firey \cite{Firey(17)}, Andrews \cite{Andrews (1)}, Chen, Huang, Li and Liu \cite{CHLL(10)}, Chen, Feng, Liu \cite{CFL 2022}, [\cite{H.Y45}, \cite{BCD(7)}] and reference therein. The centro-affine Minkowski problem is unsolved, see \cite{WZ06}. For more classical Brunn-Minkowski theory and its recent developments, we suggest readers to Schneider's book \cite{Schneider}.
	
	The $L_p$ chord Minkowski problem posed by Xi-LZY \cite{LXZY2022} is a problem of prescribing the $L_p$ chord measure: Given a finite Borel measure $\mu$ on $\mathbb{S}^{n-1},p\in \mathbb{R},$ and $q\geqslant 0.$ Asking what are the necessary and sufficient conditions for $\mu$ such that $\mu$ is the $L_p$ chord measure of a convex body $K\in \mathcal{K}^n_o,$ namely
	\begin{equation}\label{original eq}
	F_{p,q}(K,\cdot)=\mu
	\end{equation}
	when $p=1,$ it is the chord Minkowski problem. When $q=1,$ it is the $L_p$ Minkowski problem.
	When $\mu$ has a density $f$ that is an integrable nonnegtive function on $\mathbb{S}^{n-1},$ equation\eqref{original eq} becomes a new type of Monge-Amp\`{e}re equation on $\mathbb{S}^{n-1}$:
	\begin{equation}\label{MAeq}
	\mbox{det}(\nabla^2h+hI) =\frac{h^{p-1}f}{\widetilde{V}_{q-1}([h],\bar{\nabla} h)},\text{  on }\mathbb{S}^{n-1},
	\end{equation}
	where $\nabla^2h$ is the covariant differentiation of $h$ with respect to an orthonormal frame on $\mathbb{S}^{n-1},$ we look for a solution $h$ which is the support function for some nondegenerate convex body. We can extend $h$ to $\mathbb{R}^n$ via homogeneity and $\bar{\nabla} h$ is the Euclidean gradient of $h$ in $\mathbb{R}^n,$ and $\widetilde{V}_{q-1}([h],\bar{\nabla} h)$ is the $(q-1)$th dual quermassintegral of the Wulff-shape $[h]$ of $h$ with repect to the point $\bar{\nabla} h.$ 
	
	In their paper\cite{LXZY2022}, Lutwak-XYZ gave a sufficient condition for the symmetric case of the chord log-Minkowski problem by studying the delicate concentration properties of cone-chord measures. Shortly thereafter, Xi, Yang, Zhang and Zhao \cite{XYZZ2022} solved the $L_p$ chord Minkowski problem for $ p>1$ and for $0<p<1$ under the symmetric condition, where the origin symmetry played a crucial role in the case of $0 \leqslant p< 1.$ More recently, Xi, Guo and Zhao solved the $L_p$ chord Minkowski problem when $0 \leqslant p < 1$, without any symmetry assumptions. We solve the $L_p$ chord Minkowski problem when $ p < 0$, without any symmetry assumptions. Lately, we solved the $L_p$ chord Minkowski problem in the case of discrete measures whose supports are in general position for negative $p$ and $q>0.$ As for general Borel measure with a density, we also give a proof but need $p\in(-n,0)$ and $n+1>q\geqslant 1,$ without any symmetry assumptions.
	
	The aim of this paper is to establish some nonuniqueness for the $L_p$ chord Minkowski problem for $p<0<q.$ As far as we know, this is the first result towards uniqueness and nonuniqueness of solution to the $L_p$ chord Minkowski problem. Our main theorem is as following:
	\begin{theorem}\label{A}
		For $p<0,2<q<n+1,$ there exists a positive function $f\in C^{\infty}(\mathbb{S}^{n-1})$ such that \eqref{MAeq} admits at least two different solutions.
	\end{theorem}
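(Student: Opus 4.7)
My plan is to exhibit, for $f$ equal to a positive constant, two distinct solutions of \eqref{MAeq}: a round ball, and a non-spherical convex body obtained as a critical point of a natural variational functional. Combining the variational identification of $d I_q$ with the chord measure $F_q$ from \cite{LXZY2022} with the $L_p$ weight $h^{1-p}$ converting $F_q$ to $F_{p,q}$, the equation \eqref{MAeq} is, up to a fixed positive multiple of $f$, the Euler--Lagrange equation of the scale-invariant functional
\begin{equation*}
\Phi_{f}(K) \;=\; \left(\int_{\mathbb{S}^{n-1}} h_K^{p}\, f\, d\sigma\right)\cdot I_q(K)^{-p/(n+q-1)},\qquad K\in\mathcal{K}^n_o.
\end{equation*}
Since $\Phi_f$ is homogeneous of degree zero in $K$, each critical point yields, by a unique homothety, a solution of \eqref{MAeq}; the homothety is pinned down by requiring the Lagrange multiplier (itself monotone in scale since $p<0$) to take a prescribed value.

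The first solution is the round ball. Take $f\equiv c_0>0$ constant. By rotational symmetry, $\widetilde V_{q-1}(RB_n, R\xi)$ is $\xi$-independent and proportional to $R^{q-1}$, so the ansatz $h\equiv R$ reduces \eqref{MAeq} to a single scalar equation in $R$ with a unique positive root $R_0$ (since $n+q-p-1>0$ for $p<0$). This gives $h_1\equiv R_0$.

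For the second solution, I would produce a non-ball critical point of $\Phi_{c_0}$ by direct minimization in a suitable class of convex bodies, for instance origin-symmetric bodies with $I_q$ normalized. Existence of a minimizer $K^\ast$ would follow from standard lower-semicontinuity and compactness arguments combined with the existence and regularity theory for $L_p$ chord measures from \cite{XYZZ2022}. The crucial step is to show $K^\ast$ is not a ball, which I would establish by direct comparison with elongated ellipsoids $E_a$ having semi-axes $(a,1,\ldots,1)$: direct asymptotics give $\int h_{E_a}^{p}\,d\sigma \sim \max(a^p,a^{-1})$ and $I_q(E_a)\sim a^{q}$ as $a\to\infty$, so
\begin{equation*}
\Phi_{c_0}(E_a) \;\sim\; \max(a^{p},a^{-1})\cdot a^{-pq/(n+q-1)},
\end{equation*}
and an exponent comparison shows this is strictly less than $\Phi_{c_0}(B_n)$ for a range of $a$ precisely when $p<0$ and $2<q<n+1$. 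The homothety of $K^\ast$ normalizing the Lagrange multiplier to the value determined by $R_0B_n$ then yields the second solution $h_2\not\equiv h_1$.

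The main obstacle is turning this heuristic into a rigorous existence statement: because of the scale invariance, $\inf\Phi_{c_0}$ over unrestricted convex bodies may not be attained, so the minimization must be set up carefully, and the comparison between the ball and the elongated competitor must be verified uniformly in the parameters. The conditions $p<0$ and $2<q<n+1$ are exactly those for which the exponent comparison above is strict: $q<n+1$ keeps $I_q$ and the dual quermassintegrals $\widetilde V_{q-1}$ finite and controllable on elongated bodies, $q>2$ gives the long ellipsoid enough gain in $I_q$ to beat the ball in the $L_p$ functional, and $p<0$ ensures the integrand $h^p$ rewards elongation. Regularity of the resulting minimizer and smoothness of the corresponding density then follow from the Monge--Amp\`ere regularity theory together with the machinery of \cite{XYZZ2022}, and if a non-constant $f$ is desired one may perturb $c_0$ slightly and propagate both solutions by the implicit function theorem at each.
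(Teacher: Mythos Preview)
Your proposal has a genuine gap. The asymptotic $I_q(E_a)\sim a^q$ for the ellipsoid with semi-axes $(a,1,\ldots,1)$ is incorrect: rescaling and applying Lemma~\ref{elliptic estimate} to $E_a/a$ gives $I_q(E_a)\lesssim a$ for $1<q\le n$ and $I_q(E_a)\lesssim a^{q-n+1}$ for $n<q<n+1$, never $a^q$ in the range $q<n+1$. With the corrected growth one computes, for instance when $2<q\le n$, that $\Phi_{c_0}(E_a)\sim \max(a^p,a^{-1})\cdot a^{-p/(n+q-1)}\to 0$ as $a\to\infty$ for every $p\in(-(n+q-1),0)$; the same happens as $a\to 0$. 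Hence the infimum of your scale-invariant functional over origin-symmetric bodies is $0$ and is not attained, so direct minimization produces no critical point at all. Switching to maximization (which is what the paper's variational lemma actually does) does not rescue the argument: for a constant density there is no mechanism offered to prevent the maximizer from being the ball itself, and your sketch gives no alternative construction---ODE, bifurcation, or otherwise---of a non-spherical critical point for constant $f$.

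The paper takes a completely different route and does \emph{not} use constant $f$. One first solves the classical Minkowski problem with right-hand side $|x'|^\alpha|x_n|^\beta|M_\epsilon x|^{\gamma-\beta}$, then pulls the solution back by $M_\epsilon^{-1}$ and rescales to obtain a solution $H_\epsilon$ of \eqref{MAeq} for a specific smooth positive density $f_\epsilon$, with $I_q(K_{H_\epsilon})\to 0$ as $\epsilon\to 0^+$ (this is where $q>2$ enters). One then checks that $f_\epsilon$ satisfies the structural bound \eqref{f} uniformly in $\epsilon$, so Theorem~\ref{B} supplies a second, variational solution for the same $f_\epsilon$ whose chord integral is bounded below by a positive constant independent of $\epsilon$. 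The two solutions are distinguished by their chord integrals, not by comparing either one to a ball.
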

	\begin{remark}
		We have to note that the method in this paper is inspired by X.-J. Wang et al\cite{JLW non} and Q.-R. Li et al\cite{LLL non}. But the situation in our case is more complicated, since the dual quermassintegral $\widetilde{V}_{q-1}(K,z)$ is a nonlocal term in the integrand of $I_q(K).$ And we also note that the $q-1$ th dual quermassintegral $\widetilde{V}_q(K, z)$ of $K$ with respect to $z\in \partial K$ is more delicate than the $q-1$ th dual quermassintegral $\widetilde{V}_q(K)$ of $K\in \mathcal{K}^n_o.$ 
	\end{remark}
	To prove the Theorem \ref{A}, we need to find at least two different solutions. One is constructed from the solution of classic Minkowski problem with a special right-hand-side, such that the solution to \eqref{MAeq} obtained in this way has its $q-$th chord integral as small as we want. The other solution is from a new existence result for equation \eqref{MAeq} by the variational method. Before we state the result, we first need to introduce some notations.
	\begin{definition}
		A function $f:\mathbb{S}^{n-1}\rightarrow \mathbb{R}$ is called rotationally symmetric if it satisfies
		$$
		f(Ax', x_n)=f(x', x_n),\quad \forall x=(x',x_n)\in \mathbb{S}^{n-1}\text{ and }A\in O(n-1),
		$$
		where $x'=(x_1,\cdots,x_{n-1})$ and $O(\cdot)$ denotes the orthogonal group.
	\end{definition}
	Then, here comes our new existence result of solutions to \eqref{MAeq}.
	\begin{theorem}\label{B}
		For $p<0,1\leqslant q<n+1,$ and $\alpha,\beta$ satisfying
		$$
		\begin{gathered}
		\alpha>\max\{1-n, 1-n+\frac{2-n-q}{n+q-1}p\},\\
		\beta>\max\{-1,-1-\frac{p}{n+q-1}\}.
		\end{gathered}
		$$
		If $f$ is a nonnegative, rotationally symmetric, even function on $\mathbb{S}^{n-1}$ and satisfies
		\begin{equation}\label{f}
		f\leqslant C\left|x^{\prime}\right|^\alpha\left|x_n\right|^\beta,\|f\|_{L^1(\mathbb{S}^{n-1})}>0
		\end{equation}
		for some positive constant $C.$ Then there exists a rotationally symmetric even solution to \eqref{MAeq}. Moreover, we have its chord integral as follows:
		\begin{equation}\label{variational integral}
		I_q(K)\geqslant c>0,
		\end{equation}
		where $c$ depends only on $n,p,q,\alpha,\beta.$
	\end{theorem}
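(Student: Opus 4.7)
The plan is to construct the desired rotationally symmetric even solution as the minimizer of the constrained variational problem
$$\inf\bigl\{\Phi(K) : K\in\mathcal{F},\ I_q(K)=1\bigr\}, \qquad \Phi(K):=-\frac{1}{p}\int_{\mathbb{S}^{n-1}}h_K(x)^p f(x)\, d\mathcal{H}^{n-1}(x),$$
where $\mathcal{F}$ denotes the class of rotationally symmetric, origin-symmetric convex bodies in $\mathcal{K}^n_o$. Since $p<0$ and $f\ge 0$, $\Phi$ is nonnegative. The weaker integrability thresholds $\alpha>1-n$ and $\beta>-1$, already contained in the hypotheses of the theorem, together with $f\le C|x'|^\alpha|x_n|^\beta$ give $\int_{\mathbb{S}^{n-1}} f\, d\mathcal{H}^{n-1}<\infty$, so $\Phi$ is finite whenever $h_K$ admits a positive lower bound; in particular $\Phi(rB_n)<\infty$ when $r$ is chosen so that $I_q(rB_n)=1$, showing $\inf\Phi<\infty$.

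Pick a minimizing sequence $K_i\in\mathcal{F}$ with $I_q(K_i)=1$ and $\Phi(K_i)\to\inf\Phi$. Blaschke's selection theorem extracts a Hausdorff-convergent subsequence $K_i\to K_0$, and continuity of $I_q$ gives $I_q(K_0)=1$. The heart of the argument is showing that $K_0$ has non-empty interior. Under the imposed symmetries the only possible degenerations are the \emph{pancake mode} (in which $K_i$ flattens onto the hyperplane $\{x_n=0\}$, so $h_{K_i}\to 0$ near the poles $\pm e_n$) and the \emph{needle mode} (in which $K_i$ collapses onto the $x_n$-axis, so $h_{K_i}\to 0$ near the equator). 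To exclude both, parametrize $K_i$ by $a_i:=\max_{x\in K_i}|x'|$ and $b_i:=\max_{x\in K_i}x_n$, so that $h_{K_i}(x)\le a_i|x'|+b_i|x_n|$ on $\mathbb{S}^{n-1}$. A careful analysis of $I_q$ on thin rotationally symmetric bodies converts $I_q(K_i)=1$ into a scaling relation between $a_i$ and $b_i$ in each regime; splitting
$$\Phi(K_i)\gtrsim \int_{\mathbb{S}^{n-1}}\bigl(a_i|x'|+b_i|x_n|\bigr)^p |x'|^\alpha|x_n|^\beta\, d\mathcal{H}^{n-1}$$
into the polar zone $\{b_i|x_n|\ge a_i|x'|\}$ and the equatorial zone yields explicit lower bounds of the form $c\, a_i^{s_1}b_i^{s_2}$. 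The strict bounds $\alpha>1-n+\frac{2-n-q}{n+q-1}p$ and $\beta>-1-\frac{p}{n+q-1}$ are calibrated exactly so that, after substituting the scaling relation, these lower bounds tend to $+\infty$ in the corresponding degenerate limit, contradicting minimality. I expect this non-degeneracy estimate to be the principal technical obstacle, precisely because the nonlocal factor $\widetilde{V}_{q-1}([h],\bar{\nabla}h)$ inside $I_q$ makes the chord-integral scaling on degenerating bodies substantially more delicate than in the classical $L_p$ Minkowski setting treated in \cite{JLW non,LLL non}.

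Once non-degeneracy is established, $K_0$ has a positive continuous support function and dominated convergence yields $\Phi(K_0)=\inf\Phi$. By Palais' principle of symmetric criticality (valid since $f$ is $O(n-1)\times\{\pm1\}$-invariant and both $\Phi$ and $I_q$ are rotation-invariant), the Euler-Lagrange equation derived from symmetric perturbations is the same as the one from general perturbations. Combining $\delta\int h^p f=p\int h^{p-1}f\,\delta h\, d\mathcal{H}^{n-1}$ with the chord variational formula $\delta I_q(K)=q\int\delta h\, dF_q(K,\cdot)$ of \cite{LXZY2022} and the identity $dF_{p,q}(K,\cdot)=h^{1-p}\,dF_q(K,\cdot)$, the critical-point condition becomes $\lambda\, F_{p,q}(K_0,\cdot)=f\, d\mathcal{H}^{n-1}$ for a Lagrange multiplier $\lambda=\lambda(K_0)<0$; a homothety $K_0\mapsto tK_0$ with the appropriate $t>0$ absorbs $\lambda$ and produces a genuine solution of \eqref{MAeq}. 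Finally, the chord integral lower bound \eqref{variational integral} follows from this rescaling: the upper bound $\Phi(K_0)\le\Phi(rB_n)$ together with the non-degeneracy lower bound on $\Phi$ (and the diameter control it yields) pin down $t$ from below in terms of $n,p,q,\alpha,\beta$, whence $I_q(tK_0)=t^{n+q-1}\ge c(n,p,q,\alpha,\beta)>0$.
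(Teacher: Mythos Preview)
Your variational problem is set up in the wrong direction. You propose to \emph{minimize} $\Phi(K)=-\tfrac{1}{p}\int h_K^p f$ (a positive multiple of $\int h_K^p f$, since $p<0$) subject to $I_q=1$, whereas the paper \emph{maximizes} $\int h_K^p f$. The only structural hypothesis on $f$ is the upper bound $f\le C|x'|^\alpha|x_n|^\beta$; combined with the John--ellipsoid lower bound $h_{K_i}\ge\tfrac{1}{n}|A_ix|$ and $p<0$, this yields an \emph{upper} bound $\int h_{K_i}^p f\le C\int |A_ix|^p|x'|^\alpha|x_n|^\beta$, and the paper's computation shows that the thresholds on $\alpha,\beta$ are calibrated precisely so that this last integral tends to $0$ in every degenerate regime. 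Since the value at a ball is strictly positive, a maximizing sequence cannot degenerate, and the maximum is attained at a body with two-sided $C^0$ bounds. By contrast, the infimum of your $\Phi$ equals $0$ and is approached along degenerating sequences, so no non-degenerate minimizer exists.

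Concretely, your displayed lower bound $\Phi(K_i)\gtrsim\int (a_i|x'|+b_i|x_n|)^p|x'|^\alpha|x_n|^\beta$ is not available: from $h_{K_i}\le a_i|x'|+b_i|x_n|$ and $p<0$ you do get $h_{K_i}^p\ge(a_i|x'|+b_i|x_n|)^p$, but passing from $f$ to $|x'|^\alpha|x_n|^\beta$ in a \emph{lower} bound would require $f\ge c|x'|^\alpha|x_n|^\beta$, which is not assumed (only $\|f\|_{L^1}>0$ is). Even if one grants your inequality, the right-hand side is exactly the quantity the paper evaluates and shows tends to $0$ under the stated conditions on $\alpha,\beta$, not to $+\infty$ as you assert. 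The remedy is simply to switch to the maximization; once you do, your outline for the Euler--Lagrange step, the rescaling absorbing the Lagrange multiplier, and the chord-integral lower bound (which in the paper comes from $I_q(\text{rescaled }K_0)=C^{(n+q-1)/(n+q-p-1)}$ with $C\ge c\|f\|_{L^1}$ obtained from the uniform upper bound on the maximizer $h$) is essentially correct.
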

	\begin{remark}
		In our previous paper, we solved the $L_p$ chord Minkowski problem for general Borel measure with a density $f$, without any symmetry assumptions , but need $p\in(-n,0)$ and $n+1>q\geqslant 1.$ Here, when $f$ satisfies the conditions in Theorem \ref{B}, we can obtain the existence result for all $p<0.$ While the proof of the two existence results follows similar approaches, we would like to emphasize that the proof of $C^0$ estimate is totally different and more difficult. 
	\end{remark}
	
	The remainder of this paper is structured as follows. In Section 2, we present fundamental concepts in the theory of convex bodies and integral geometry. In Section 3, we will construct a smooth, positive function $f,$ which is rotationally symmetric in $x_1,\cdots, x_{n-1}$ and even, such that \eqref{MAeq} has a solution with small chord integral. In Section 4, we establish a variational solution and prove Theorem \ref{B}. In Section 5, we prove Theorem \ref{A} based on the existence result in Theorem \ref{B} and the solution constructed in Section 3.
	
	\section{\bf Preliminaries}
	In this section, our objective is to establish notations and gather relevant results from the literature that will be necessary for the subsequent analysis. 
	
	We denote $x\cdot y$ as the standard inner product of $x,y\in \mathbb{R}^n,$ and write $|x|=\sqrt{x\cdot x}$ for the Euclidean norm of $x.$ We write $\mathbb{S}^{n-1}$ as $(n-1)$-dimension unit sphere of $\mathbb{R}^n,$ and denote $\mathcal{H}^{n-1}$ as the $(n-1)$-dimensional spherical Lebesgue measure. Denote $\mathcal{K}^n$ for the collection of all convex bodies in $\mathbb{R}^n$ and $\mathcal{K}^n_o$ for the subset of $\mathcal{K}^n$ that contains the origin in the interior.
	
	Let $\Omega\subset \mathbb{S}^{n-1}$ be a closed set of the unit sphere, not lying in a closed hemisphere, and a positive continuous function $h: \mathbb{S}^{n-1}\rightarrow \mathbb{R}$ is given.(Only the values of h on $\Omega$ will be needed, but without loss of generality we may assume that $h$ is defined on all of $\mathbb{S}^{n-1}.$) The Wulff shape of $h$ is defined by
	$$
	[h]=\{x\in\mathbb{R}^n: x\cdot u\leqslant h(u)\text{ for all } u\in\mathbb{S}^{n-1}\}.
	$$

	Let $K\in \mathcal{K}^n,$ $h(v)=h_K(v)=\max\{v\cdot x,x\in K\},$ $\rho(u)=\rho_K(u)=\max\{\lambda:\lambda u\in K\}$ are the support function and the radial function of convex body $K$ defined from $ \mathbb{S}^{n-1}\rightarrow \mathbb{R}.$ We write the support hyperplane of $K$ with the outer unit normal $v$ as
	$$
	H_K(v)=\left\{x\in \mathbb{R}^n: x\cdot v=h(v) \right\},
	$$
	the half-space $H^{-}(K,v)$ in direction $v$ is defined by
	$$
	H^{-}_K(v)=\left\{x\in \mathbb{R}^n: x\cdot v\leqslant h(v) \right\}.
	$$  
	Denote $\partial K$ as the boundary of $K$, that is, $\partial K=\{\rho_K(u)u:u\in \mathbb{S}^{n-1}\}.$ The spherical image $\nu=\nu_{K}:\partial K\rightarrow \mathbb{S}^{n-1}$ is given by
	$$
	\nu(x)=\{v\in\mathbb{S}^{n-1}:x\in H_K(v)\},
	$$
	let $\sigma_K\subset \partial K$ denote the set of all points $x\in \partial K,$ such that the set $\nu_K(x)$ contains more than one element. Fortunately, we have $\mathcal{H}^{n-1}(\sigma_K)=0$ (see \cite[page 84]{Schneider} ) 
	and the radial Gauss image $\alpha=\alpha_K$ and the reverse radial Gauss image $\alpha^*=\alpha^*_K$ are respectively defined by
	$$
	\alpha(\omega)=\{\nu(\rho_K(u)u):u\in \omega\},\alpha^*(\omega)=\{u\in \mathbb{S}^{n-1} \nu(\rho_K(u)u)\in \omega\}.
	$$
	Let $K \in \mathcal{K}^n$, for $z \in \operatorname{int} K$ and $q \in \mathbb{R}$, the $q$ th dual quermassintegral $\widetilde{V}_q(K, z)$ of $K$ with respect to $z$ is defined by
	\begin{equation}\label{dual}
	\widetilde{V}_q(K, z)=\frac{1}{n} \int_{S^{n-1}} \rho_{K, z}(u)^q \mathrm{~d} u
	\end{equation}
	where $\rho_{K, z}(u)=\max \{\lambda>0: z+\lambda u \in K\}$ is the radial function of $K$ with respect to $z$. When $z \in \partial K, \widetilde{V}_q(K, z)$ is defined in the way that the integral is only over those $u \in S^{n-1}$ such that $\rho_{K, z}(u)>0$. In another word,
	$$
	\widetilde{V}_q(K, z)=\frac{1}{n} \int_{\rho_{K, z}(u)>0} \rho_{K, z}(u)^q \mathrm{~d} u \text {, whenever } z \in \partial K .
	$$
	In this case, for $\mathcal{H}^{n-1}$-almost all $z \in \partial K$, we have
	$$
	\widetilde{V}_q(K, z)=\frac{1}{2 n} \int_{S^{n-1}} X_K(z, u)^q \mathrm{~d} u
	$$
	where the parallel $X$-ray of $K$ is the nonnegative function on $\mathbb{R}^n \times S^{n-1}$ defined by
	$$
	X_K(z, u)=|K \cap(z+\mathbb{R} u)|, \quad z \in \mathbb{R}^n, \quad u \in S^{n-1} .
	$$
	When $q>0$, the dual quermassintegral is the Riesz potential of the characteristic function, that is,
	$$
	\widetilde{V}_q(K, z)=\frac{q}{n} \int_K|x-z|^{q-n} \mathrm{~d} x
	$$
	Note that this immediately allows for an extension of $\widetilde{V}_q(K, \cdot)$ to $\mathbb{R}^n$. An equivalent definition via radial function can be found in \cite{LXZY2022}. By a change of variables, we obtain:
	$$
	\widetilde{V}_q(K, z)=\frac{q}{n} \int_{K-z}|y|^{q-n} \mathrm{~d} y
	$$
	since when $q>0$, the integrand $|y|^{q-n}$ being locally integrable, it can be inferred that the dual quermassintegral $\widetilde{V}_q(K, z)$ is continuous in $z$. Let $K \in \mathcal{K}^n$. The $X$-ray $X_K(x, u)$ and the radial function $\rho_{K, z}(u)$ are related as follows:
	$$
	X_K(x, u)=\rho_{K, z}(u)+\rho_{K, z}(-u), \quad \text { when } \quad K \cap(x+\mathbb{R} u)=K \cap(z+\mathbb{R} u) \neq \varnothing .
	$$
	When $z \in \partial K$, then either $\rho_{K, z}(u)=0$ or $\rho_{K, z}(-u)=0$ for almost all $u \in S^{n-1}$, and thus
	$$
	X_K(z, u)=\rho_{K, z}(u), \quad \text { or } X_K(z, u)=\rho_{K, z}(-u), \quad z \in \partial K \text {, }
	$$
	for almost all $u \in S^{n-1}$. Then, the chord integral $I_q(K)$ can be represented as follows:
	$$
	I_q(K)=\frac{1}{n \omega_n} \int_{S^{n-1}} \int_{u^{\bot}} X_K(x, u)^q \mathrm{~d} x \mathrm{~d} u, \quad q \geq 0 .
	$$
	An elementary property of the functional $I_q$ is its homogeneity. If $K \in \mathcal{K}^n$ and $q \geq 0$, then
	$$
	I_q(t K)=t^{n+q-1} I_q(K),
	$$
	for $t>0$. By compactness of $K$, it is simple to see that the chord integral $I_q(K)$ is finite whenever $q \geq 0$.
	Let $K \in \mathcal{K}^n$ and $q>0$, the chord measure $F_q(K, \cdot)$ is a finite Borel measure on $S^{n-1}$, which can be expressed as:
	$$
	F_q(K, \eta)=\frac{2 q}{\omega_n} \int_{v^{-1}(\eta)} \widetilde{V}_{q-1}(K, z) \mathrm{d} \mathcal{H}^{n-1}(z), \quad \text { for each Borel } \eta \subset S^{n-1}.
	$$
	The mapping $v_K$ of $K$ is almost everywhere defined on $\partial K$ with respect to the $(n-1)$-dimensional Hausdorff measure, owing to the convexity of $K$. The chord measure $F_q(K, \cdot)$ is significant as it is obtained by differentiating the chord integral $I_q$ in a certain sense, as shown in \eqref{diff}. It is evident that the chord measure $F_q(K, \cdot)$ is absolutely continuous with respect to the surface area measure $S_{n-1}(K, \cdot)$. In \cite[Theorem 4.3]{LXZY2022}, it was demonstrated that:
	$$
	I_q(K)=\frac{1}{n+q-1} \int_{s^{n-1}} h_K(v) \mathrm{d} F_q(K, v)
	$$
	When $q>0$, a useful integral formula demonstrated in \cite[Lemma 5.3]{LXZY2022} is
	\begin{footnotesize}
		$$
		2 n \int_{\partial K} \widetilde{V}_{q-1}(K, z) g\left(v_K(z)\right) \mathrm{d} \mathcal{H}^{n-1}(z)=\int_{s^{n-1}} \int_{\partial K} X_K(z, u)^{q-1} g\left(v_K(z)\right) \mathrm{d} \mathcal{H}^{n-1}(z) \mathrm{d} u,
		$$
	\end{footnotesize}
	for any $g \in C\left(S^{n-1}\right)$. Therefore, for each $K \in \mathcal{K}^n$, we have
	$$
	\begin{aligned}
	\int_{S^{n-1}} g(v) \mathrm{d} F_q(K, v) & =\frac{q}{n \omega_n} \int_{S^{n-1}} \int_{\partial K} X_K(z, u)^{q-1} g\left(v_K(z)\right) \mathrm{d} \mathcal{H}^{n-1}(z) \mathrm{d} u \\
	& =\frac{q}{n \omega_n} \int_{S^{n-1}} \int_{S^{n-1}} X_K\left(\rho_K(w) w, u\right)^{q-1} h_K\left(\alpha_K(w)\right)^{-1}\\
	&\quad \rho_K(w)^n g\left(\alpha_K(w)\right) \mathrm{d} w \mathrm{~d} u .
	\end{aligned}
	$$
	Here, we denote $\rho_K=\rho_{K, o}$. For each $p \in \mathbb{R}$ and $K \in \mathcal{K}_o^n$, the $L_p$ chord measure $F_{p, q}(K, \cdot)$ is defined as follows:
	$$
	\mathrm{d} F_{p, q}(K, v)=h_K(v)^{1-p} \mathrm{~d} F_q(K, v)
	$$
	and we have an important property of $F_{p,q},$ its homogeneity, namely
	\begin{equation*}
	F_{p,q}(tK,\cdot)=t^{n+q-p-1}F_{p,q}(K,\cdot)
	\end{equation*}
	for each $t>0.$
	
	From Theorem 2.2 in \cite{XYZZ2022}, we know that if $K_i\in \mathcal{K}^{n}_o \rightarrow K_0\in \mathcal{K}^{n}_o,$ then the chord measure $F_q(K_i,\cdot)$ converges to $F_q(K,\cdot)$ weakly. Hence, one can immediately obtain that 
	$$
	F_{p,q}(K_i,\cdot) \rightarrow F_{p,q}(K,\cdot) \text{ weakly. }
	$$
	It was shown in \cite{LXZY2022} that the differential of the chord integral $I_q$ with respect to the $L_p$ Minkowski combinations leads to the $L_p$ chord measure: for $p \neq 0$,
	$$
	\left.\frac{\mathrm{d}}{\mathrm{d} t}\right|_{t=0} I_q\left(K+_p t \cdot L\right)=\frac{1}{p} \int_{S^{n-1}} h_L^p(v) \mathrm{d} F_{p, q}(K, v),
	$$
	where $K+_p t \cdot L$ is the $L_p$ Minkowski combination between $K$ and $L.$
	
	Since we will use a variational method to solve the $L_p$ chord Minkowski problem, the variational formula for chord integral is crucial and it is the key to tansforming the Minkowski problem into the Lagrange equation of an optimization problem. 
	\begin{theorem}[Theorem 5.5 in \cite{LXZY2022}]
		Let $q>0,$ and $\Omega$ be a compact subset of $\mathbb{S}^{n-1}$ that is not contained in any closed hemisphere. Suppose that $g:\Omega\rightarrow(0,\infty)$ is a family of continuous functions given by
		$$
		h_t=h_0+tg+o(t,\cdot),
		$$
		for each $t\in(-\delta,\delta)$ for some $\delta>0.$ Here $o(t,\cdot)\in C(\Omega)$ and $o(t,\cdot)/v$ tends to $0$ uniformly on $\Omega$ as $t\rightarrow 0.$ Let $K_t$ be the Wulff shape generated by $h_t$ and $K$ be the Wulff shape generated by $h_0.$ Then,
		\begin{equation}\label{diff}
		\frac{d}{dt}\big|_{t=0}I_q(K_t)=\int_{\Omega}g(v)dF_q(K,v).
		\end{equation}
	\end{theorem}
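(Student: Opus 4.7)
The plan is to differentiate $I_q(K_t)$ directly by means of the parallel $X$-ray representation
$$I_q(K_t) = \frac{1}{n\omega_n}\int_{\mathbb{S}^{n-1}}\int_{u^\perp} X_{K_t}(x,u)^q\, dx\, du,$$
compute the pointwise derivative $\frac{d}{dt}\bigg|_{t=0} X_{K_t}(x,u)^q$ at almost every line, and reassemble the resulting expression via the boundary representation of $\int g\, dF_q(K,\cdot)$ recorded in the preliminaries. As a preliminary, I would first establish Hausdorff convergence $K_t \to K_0 = K$ as $t\to 0$, which follows from the uniform convergence $h_t\to h_0$ on $\Omega$ together with standard continuity of the Wulff-shape operation on positive continuous functions on a set not contained in a closed hemisphere; the assumption $g>0$ plus the positivity of $h_0$ guarantees the $K_t$ remain uniformly bounded and nondegenerate for small $t$.

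For almost every line $\ell = x + \mathbb{R} u$ meeting $\mathrm{int}\,K$ in a nondegenerate chord whose two boundary endpoints $z^\pm\in\partial K$ possess unique outer normals $v^\pm$ with $u\cdot v^\pm\neq 0$, the perturbed endpoints $z^\pm_t\in\partial K_t$ satisfy the support condition $z^\pm_t\cdot v^\pm_t = h_t(v^\pm_t)$ and lie on $\ell$. Since $v^\pm_t = v^\pm + o(1)$ and $h_t = h_0 + tg + o(t)$ uniformly, a direct computation in the two-dimensional plane containing $\ell$ and each normal yields
$$\frac{d}{dt}\bigg|_{t=0} X_{K_t}(x,u) = \frac{g(v^+)}{|u\cdot v^+|} + \frac{g(v^-)}{|u\cdot v^-|},$$
and consequently
$$\frac{d}{dt}\bigg|_{t=0} X_{K_t}(x,u)^q = q\, X_K(x,u)^{q-1}\Big(\tfrac{g(v^+)}{|u\cdot v^+|} + \tfrac{g(v^-)}{|u\cdot v^-|}\Big).$$
Interchanging the derivative with the double integral and changing variables on each half of the line from $x\in u^\perp$ to the corresponding boundary point $z\in\partial K$ (Jacobian $|u\cdot \nu_K(z)|$ cancelling the denominator) converts the expression into
$$\frac{q}{n\omega_n}\int_{\mathbb{S}^{n-1}}\int_{\partial K} X_K(z,u)^{q-1}\, g(\nu_K(z))\, d\mathcal{H}^{n-1}(z)\, du,$$
which by the integral identity recorded in the preliminaries equals exactly $\int_{\mathbb{S}^{n-1}} g(v)\, dF_q(K,v)$.

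The main obstacle is justifying the interchange of derivative and integral: the formal expression for $\frac{d}{dt}X_{K_t}(x,u)$ blows up on the tangential set where $u\cdot v^\pm\to 0$, and identifying $z^\pm_t$ as a smooth function of $t$ requires some regularity of $\partial K$ near the endpoints. To overcome this, I would first prove the formula assuming $K\in C^2_+$ is strictly convex and $g\in C^2(\Omega)$, where tubular-neighborhood estimates give uniform bounds on $X_{K_t}$ and its $t$-derivative permitting dominated convergence, then pass to a general $K$ and $g$ by smooth approximation, using the weak continuity $F_q(K_s,\cdot)\rightharpoonup F_q(K,\cdot)$ recorded in the preliminaries together with the Hausdorff convergence $K_t\to K$. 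Alternatively, when $q>1$, one can exploit the Riesz-potential representation $\widetilde V_{q-1}(K,z) = \frac{q-1}{n}\int_K |x-z|^{q-1-n}\,dx$ to rewrite $F_q(K_t,\cdot)$ in a form that is easily differentiable in $t$ without the tangential-line issue, with the case $0<q\leq 1$ handled separately by the direct parameterization above.
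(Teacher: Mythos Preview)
The paper does not prove this theorem. It is stated in the Preliminaries section explicitly as ``Theorem 5.5 in \cite{LXZY2022}'' and followed only by the remark ``See also in \cite[Theorem 2.1]{XYZZ2022}.'' There is no argument in the present paper to compare your proposal against; the result is quoted from the literature and used as a black box in the variational argument of Section~4.

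That said, your sketch is a plausible outline of how the original proof in \cite{LXZY2022} presumably proceeds: the parallel $X$-ray representation of $I_q$, the first-variation computation of the chord length along each line, and the change of variables from $u^\perp$ to $\partial K$ with Jacobian $|u\cdot\nu_K(z)|$ are exactly the ingredients that produce the integral formula $\int g\,dF_q(K,\cdot)$ recorded in the preliminaries. Your identification of the main technical obstacle---the blow-up of $|u\cdot v^\pm|^{-1}$ on tangential lines and the justification of dominated convergence---is correct, and the regularize-then-approximate strategy using weak continuity of $F_q$ is the standard way to close such an argument. If you want to verify the details, you should consult \cite{LXZY2022} directly; the present paper offers nothing further.
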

	See also in \cite[Theorem 2.1]{XYZZ2022}.
	
	To solve the maximization problem posed in Section 4, delicate estimates for chord integrals are needed. We collect the following lemma obtained in \cite{LXZY2022}.
	\begin{lemma}[lemma 7.3 \cite{LXZY2022}]\label{elliptic estimate}
		Suppose $q \in(1, n+1)$ is not an integer. If $E$ is the ellipsoid in $\mathbb{R}^n$ given by
		$$
		E=E\left(a_1, \ldots, a_n\right)=\left\{x \in \mathbb{R}^n: \frac{\left(x \cdot e_1\right)^2}{a_1^2}+\cdots+\frac{\left(x \cdot e_n\right)^2}{a_n^2} \leq 1\right\}
		$$
		with $0<a_1 \leq a_2 \leq \cdots \leq a_n \leq 1$, then for any real $q$ and integer $m$ such that $1 \leq m<q<$ $m+1 \leq n+1$,
		$$
		I_q(E) \leq c_{q, m, n}\left(a_1 \cdots a_m\right)^2 a_m^{q-m-1} a_{m+1} \cdots a_n
		$$
		where $c_{q, m, n}$ is a constant that depends only on $q$ and $n$ (since $m=\lfloor q\rfloor$ ) and is given by
		$$
		c_{q, m, n}= \begin{cases}\frac{2^{q-n+2} q(q-1) \omega_{n-1}^2}{(q-n)(q-n+1) n \omega_n} & m=n \\ \frac{2^{n-m+3} q(q-1)(n-m) \omega_{m-1}^2 \omega_{n-m}^2}{(m+1-q)(q-m)(q-m+1) n \omega_n} & m<n .\end{cases}
		$$
	\end{lemma}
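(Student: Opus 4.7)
The strategy is to reduce $I_q(E)$ to an integral over $S^{n-1}$ via the linear change of variables taking $B_n$ to $E$, and then bound that sphere integral by splitting directions according to the decomposition $\mathbb{R}^n=\mathbb{R}^m\oplus\mathbb{R}^{n-m}$. Starting from the $X$-ray representation recalled in the preliminaries, write $E=T(B_n)$ with $T=\diag(a_1,\ldots,a_n)$, and for each fixed $u\in S^{n-1}$ set $\tilde u=T^{-1}u/|T^{-1}u|$ and $\Phi:=P_{\tilde u^{\perp}}\circ T^{-1}|_{u^{\perp}}$. A line through $x\in u^{\perp}$ in direction $u$ is the $T$-image of a line through $T^{-1}x$ in direction $\tilde u$, so $X_E(x,u)=|T^{-1}u|^{-1}X_{B_n}(\Phi(x),\tilde u)$. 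Combined with the identity $|\det T^{-1}|=|T^{-1}u|\cdot|\det\Phi|$ (from the block decomposition in orthonormal bases beginning with $u$ and $\tilde u$) and the rotational invariance $\int_{\tilde u^{\perp}}X_{B_n}(\tilde x,\tilde u)^q\,d\tilde x=I_q(B_n)$, this yields the exact reduction
\begin{equation*}
I_q(E)=\frac{I_q(B_n)(a_1\cdots a_n)}{n\omega_n}\,J,\qquad J:=\int_{S^{n-1}}\Bigl(\sum_{i=1}^n \tfrac{u_i^2}{a_i^2}\Bigr)^{(1-q)/2}du.
\end{equation*}

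To estimate $J$, I use $a_i\le a_m$ for $i\le m$ to bound $\sum_i u_i^2/a_i^2\ge|u^{(m)}|^2/a_m^2+\sum_{i>m}u_i^2/a_i^2$, where $u^{(m)}=(u_1,\ldots,u_m)$; since $(1-q)/2<0$, this is a pointwise upper bound on the integrand. When $m<n$, I parametrise $u=(r\theta,\sqrt{1-r^2}\phi)$ with $r\in[0,1]$, $\theta\in S^{m-1}$, $\phi\in S^{n-m-1}$, so that $du=r^{m-1}(1-r^2)^{(n-m-2)/2}\,dr\,d\theta\,d\phi$. After the trivial $\theta$-integration, a change of variables in $\phi$ analogous to the first reduction step but now applied to the $(n-m)$-dimensional ellipsoid $E(a_{m+1},\ldots,a_n)$ produces the factor $a_{m+1}\cdots a_n$ and reduces matters to a one-dimensional $r$-integral of Beta type.

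The endpoint analysis of this $r$-integral delivers the remaining factors in the target bound. Near $r=1$ the pointwise estimate $(r^2/a_m^2)^{(1-q)/2}\le a_m^{q-1}$ combined with $(1-r^2)^{(n-m-2)/2}$ yields the $a_m^{q-1}$ part, with the residual factor $(a_1\cdots a_m)a_m^{-m}\le 1$ already absorbed into the inequality $\sum_{i\le m}u_i^2/a_i^2\ge|u^{(m)}|^2/a_m^2$. The denominator $(m+1-q)(q-m)(q-m+1)$ in $c_{q,m,n}$ then arises as Beta-function convergence factors at the two $r$-endpoints, together with a polar-Jacobian contribution. For the degenerate case $m=n$ (so $q\in(n,n+1)$), no $\phi$-integration is present, and a direct one-dimensional estimate gives the alternative constant, with $(q-n)(q-n+1)$ appearing in the denominator in place of the above product.

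The main obstacle is the precise tracking of numerical constants so that they aggregate exactly to the stated $c_{q,m,n}$: all three Beta-function denominators must appear with the correct multiplicity, which forces careful analysis of both $r$-endpoints and the transition between the $m<n$ and $m=n$ regimes. The reduction to $B_n$ and the splitting $\mathbb{R}^n=\mathbb{R}^m\oplus\mathbb{R}^{n-m}$ are the structural ingredients that make this quantitative matching possible.
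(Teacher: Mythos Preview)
The paper does not prove this lemma; it is quoted verbatim from \cite{LXZY2022} and used as a black box, so there is no ``paper's own proof'' to compare against. Your reduction $I_q(E)=\frac{I_q(B_n)(a_1\cdots a_n)}{n\omega_n}\,J$ with $J=\int_{S^{n-1}}|T^{-1}u|^{1-q}\,du$ is correct and is a clean starting point.

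The gap is in the estimate of $J$. Once you replace $\sum_{i\le m}u_i^2/a_i^2$ by $|u^{(m)}|^2/a_m^2$, the resulting upper bound on $J$ depends only on $a_m,a_{m+1},\dots,a_n$, so no subsequent manipulation---in particular your $\phi$-change of variables on the last $n-m$ coordinates---can restore any dependence on $a_1,\dots,a_{m-1}$. What your argument actually yields is $J\le C\,a_m^{\,q-1}$, whereas the lemma requires $J\le C(a_1\cdots a_m)\,a_m^{\,q-m-1}$. Since $a_m^{\,q-1}=a_m^{\,m}\cdot a_m^{\,q-m-1}\ge (a_1\cdots a_m)\,a_m^{\,q-m-1}$, your bound is strictly \emph{weaker} than the target, not stronger; the sentence ``the residual factor $(a_1\cdots a_m)a_m^{-m}\le 1$ already absorbed into the inequality'' has the inequality pointing the wrong way. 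A concrete failure: take $m\ge 2$, $a_1=\varepsilon$, $a_2=\cdots=a_n=1$. Then a direct computation gives $J\asymp\varepsilon$ as $\varepsilon\to 0$, matching the lemma's bound $C(a_1\cdots a_m)a_m^{\,q-m-1}=C\varepsilon$, but your crude estimate only gives $J\le C$, which is useless here. To recover the full product $a_1\cdots a_m$ you must keep the individual $a_i$ ($i\le m$) in play---for instance by working with the Riesz-potential form $I_q(E)=\frac{q(q-1)}{n\omega_n}\iint_{E\times E}|x-y|^{q-n-1}\,dx\,dy$ and iterating one coordinate at a time, or by a more careful sphere decomposition that does not collapse the first $m$ semi-axes to $a_m$.
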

	
	As for $q \in \{1,\cdots, n\},$ we shall deduce the same form of estimate. First, we recall a significant inequality presented in the following lemma, which was obtained in \cite{LXZY2022}:
	\begin{lemma}[claim 8.1 \cite{LXZY2022}]\label{chord integral ineq}
		If $K\in \mathcal{K}^n_o$ and $1\leqslant r<s,$ then 
		$$
		I_r(K)\leqslant c(s,r)V(K)^{1-\frac{r-1}{s-1}}I_s(K)^{\frac{r-1}{s-1}},
		$$ 
		with $c(s,r)=rs^{-\frac{r-1}{s-1}}.$
	\end{lemma}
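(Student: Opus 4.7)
The plan is to reduce the inequality to a one-line H\"older estimate by rewriting $I_q(K)$ via a layer-cake decomposition. Starting from the $X$-ray representation $I_q(K)=\frac{1}{n\omega_n}\int_{S^{n-1}}\int_{u^{\bot}}X_K(x,u)^q\,dx\,du$ recorded in the preliminaries, the identity $t^q=q\int_0^t s^{q-1}\,ds$ together with Fubini yields
\[
I_q(K)=\frac{q}{n\omega_n}\int_0^\infty t^{q-1}\mu(t)\,dt,
\]
where $\mu(t):=\int_{S^{n-1}}\mathcal{H}^{n-1}(\{x\in u^{\bot}:X_K(x,u)\geq t\})\,du$ is a nonincreasing, compactly supported function of $t$. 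Specializing to $q=1$ (and using $I_1(K)=V(K)$) gives the first moment identity $\int_0^\infty\mu(t)\,dt=n\omega_n V(K)$, while specializing to $q=s$ gives $\int_0^\infty t^{s-1}\mu(t)\,dt=\frac{n\omega_n}{s}I_s(K)$.

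The case $r=1$ of the lemma is then just the equality $I_1=V$, so I may assume $r>1$. Set $\alpha:=\frac{r-1}{s-1}\in(0,1)$. Since $\alpha(s-1)=r-1$, the integrand for $I_r$ factors as
\[
t^{r-1}\mu(t)=\mu(t)^{1-\alpha}\cdot\bigl(t^{s-1}\mu(t)\bigr)^{\alpha}.
\]
Applying H\"older's inequality with conjugate exponents $1/(1-\alpha)$ and $1/\alpha$ gives
\[
\int_0^\infty t^{r-1}\mu(t)\,dt\leq\left(\int_0^\infty\mu(t)\,dt\right)^{1-\alpha}\left(\int_0^\infty t^{s-1}\mu(t)\,dt\right)^{\alpha}.
\]
Substituting the two moment identities and observing that the factor $n\omega_n$ appears with total exponent $(1-\alpha)+\alpha=1$ and hence cancels against the prefactor $r/(n\omega_n)$, one arrives at
\[
I_r(K)\leq r\,s^{-\alpha}\,V(K)^{1-\alpha}\,I_s(K)^{\alpha},
\]
which is precisely $c(s,r)V(K)^{1-\frac{r-1}{s-1}}I_s(K)^{\frac{r-1}{s-1}}$.

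There is no real obstacle here; the whole argument is one layer-cake followed by one H\"older. The only point requiring care is choosing the factorization $t^{r-1}\mu=\mu^{1-\alpha}(t^{s-1}\mu)^\alpha$ so that the two resulting integrals are exactly the moments controlled by $V(K)$ and $I_s(K)$, and then tracking the numerical constant $r\,s^{-(r-1)/(s-1)}$ through the cancellation of $n\omega_n$.
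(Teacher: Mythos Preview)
Your argument is correct: the layer-cake identity $I_q(K)=\frac{q}{n\omega_n}\int_0^\infty t^{q-1}\mu(t)\,dt$ is valid by Fubini, the factorization $t^{r-1}\mu=\mu^{1-\alpha}(t^{s-1}\mu)^\alpha$ with $\alpha=\frac{r-1}{s-1}$ is exact, and the constants are tracked correctly to yield precisely $c(s,r)=r\,s^{-(r-1)/(s-1)}$.

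The paper does not give a proof but remarks that the inequality ``follows from a simple argument using Jensen's inequality.'' What is meant is the direct application of H\"older (equivalently Jensen) to the $X$-ray itself rather than to its distribution function: writing $X_K^{\,r}=(X_K)^{1-\alpha}(X_K^{\,s})^{\alpha}$ and applying H\"older with exponents $\tfrac{1}{1-\alpha},\tfrac{1}{\alpha}$ over $S^{n-1}\times u^{\bot}$ gives
\[
n\omega_n I_r(K)\le (n\omega_n V(K))^{1-\alpha}(n\omega_n I_s(K))^{\alpha},
\]
i.e.\ the same inequality with the sharper constant $1\le c(s,r)$. Your detour through the layer-cake introduces the prefactors $q$ in $I_q=\frac{q}{n\omega_n}\int t^{q-1}\mu$, and that is exactly where the extra factor $r\,s^{-\alpha}$ appears. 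So both routes are fine; the paper's intended one-liner is shorter and in fact slightly stronger, while yours has the virtue of landing on the stated constant on the nose.
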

	This inequality follows from a simple argument using jensen's inequality. And one immediately obtains the desired estimate for chord integrals when $q$ is an integer.  
	
	\section{\bf Structural solution }
	Let $0<\epsilon<\frac{1}{2}$, $M_{\epsilon}\in GL(n)$ be given by
	$$
	M_{\epsilon}=diag(\epsilon,\cdots,\epsilon,1)=\left(\begin{array}{cc}
	\epsilon I & 0 \\
	0 & 1
	\end{array}\right),
	$$
	where $I$ is the unit $(n-1)\times (n-1)$ matrix.
	
	Consider the following equation:
	\begin{equation}\label{Minkowski}
	\operatorname{det}\left(\nabla^2 h+h I\right)(x)=\left|x^{\prime}\right|^\alpha\left|x_n\right|^\beta\left|M_\epsilon x\right|^{\gamma-\beta}, \quad x \in \mathbb{S}^{n-1} .
	\end{equation}
	
	We can choose appropriate indices $\alpha$, $\beta$ and $\gamma,$ in this section, we need $2<q\leqslant 1+n,-1<\gamma<-1-\frac{p}{n+q-1},$ $\alpha$ and $\beta$ are nonnegative and satisfying the assumptions of Theorem \ref{B}. As its right-hand side is even with respect to the origin and satisfies the necessary condition 
	$$
	\int_{S^{n-1}}x_k\left|x^{\prime}\right|^\alpha\left|x_n\right|^\beta\left|M_\epsilon x\right|^{\gamma-\beta}=0
	$$ 
	for all $1\leqslant k\leqslant n$, this classical Minkowski problem exists a solution $h_\epsilon$, which is unique up to translation by \cite{Cheng-Yau}.
	
	Let $h_\epsilon$ be the unique solution such that its associated convex body $K_{h_\epsilon}$ centred at the origin. We have to note that $K_{h_\epsilon}$ is rotationally symmetric and even and the positive constants $C, \tilde{C}$ and $c_i,C_i$ in the following context depend only on $n, p, q, \alpha, \beta$, and $\gamma$ but independent of $\epsilon$.
	
	\begin{lemma}
		Let $\gamma>-1.$ There exists a positive constant $C$, independent of $\epsilon \in(0,\frac{1}{2})$, such that
		\begin{equation}\label{h epsilon bound}
		C^{-1} \leq h_\epsilon \leq C \text { on } \quad \mathbb{S}^{n-1}.
		\end{equation}
	\end{lemma}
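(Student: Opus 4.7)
The plan is to control $h_\epsilon$ via uniform bounds on the surface area and volume of $K_{h_\epsilon}$, and then to translate these into pointwise bounds on $h_\epsilon$ using John's ellipsoid adapted to the symmetries of $K_{h_\epsilon}$. First, I will establish that $\int_{\mathbb{S}^{n-1}}|x'|^\alpha |x_n|^\beta |M_\epsilon x|^{\gamma-\beta}\,dx$ is uniformly bounded above and below for $\epsilon \in (0, \tfrac{1}{2})$. The upper bound is immediate from $|M_\epsilon x| \le 1$ when $\gamma \ge \beta$; when $\gamma < \beta$, the factor $|M_\epsilon x|^{\gamma-\beta}$ can blow up near the equator $\{x_n=0\}$, but this is compensated by $|x_n|^\beta$, and the substitution $s = x_n/\epsilon$ in spherical coordinates together with $\gamma > -1$ yields an $\epsilon$-independent bound. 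The lower bound follows by restricting integration to a fixed region where $|x'|$ and $|x_n|$ are bounded away from $0$. Since $\int f_\epsilon = S(K_{h_\epsilon})$, this gives $c_1 \le S(K_{h_\epsilon}) \le C_1$, and moreover $f_\epsilon \ge c_2 > 0$ uniformly on some fixed open subset of $\mathbb{S}^{n-1}$.

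Next, I will bound $V(K_{h_\epsilon})$ uniformly from above by the classical isoperimetric inequality, and from below by a contradiction argument. Suppose $V(K_{h_{\epsilon_k}}) \to 0$ along a subsequence. If in addition $\max h_{\epsilon_k} \to \infty$, so that $K_{h_{\epsilon_k}}$ becomes needle-like, then by the $O(n-1)$-symmetry the needle must extend along the $e_n$-axis, and consequently $S_{K_{h_{\epsilon_k}}}$ would concentrate near the equator $\{x_n=0\}$; however our explicit $f_\epsilon$ carries only mass of order $\epsilon^{\gamma+1}$ on an $\epsilon$-neighborhood of the equator, which vanishes as $\epsilon_k \to 0$, a contradiction. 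Otherwise the $h_{\epsilon_k}$ are uniformly bounded above and Blaschke selection produces a Hausdorff limit $K_0$ with $V(K_0)=0$; by weak continuity of the surface area measure, $S_{K_0}$ equals the weak limit of $f_{\epsilon_k}\,dx$. But on any compact subset of $\mathbb{S}^{n-1} \setminus \{x_n = 0\}$ the integrand converges in $L^1$ to the non-degenerate density $|x'|^\alpha |x_n|^\gamma$, whereas $S_{K_0}$ is supported on at most two antipodal unit vectors, contradicting the non-concentrating nature of the limit density.

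Finally, with $c \le V(K_{h_\epsilon})$ and $S(K_{h_\epsilon}) \le C$ in hand, I invoke John's ellipsoid. Because $K_{h_\epsilon}$ is centrally symmetric (by evenness of $f_\epsilon$) and $O(n-1)$-invariant in the first $n-1$ coordinates, its John ellipsoid is of the form $E = E(a,\dots,a,b)$ with the first $n-1$ semi-axes equal, and $E \subseteq K_{h_\epsilon} \subseteq \sqrt{n}\,E$. Direct computation gives $V(E) \sim a^{n-1}b$ and $S(E) \sim a^{n-2}(a+b)$; combining these with the uniform bounds on $V$ and $S$ pins down $c \le a, b \le C$, and since $h_\epsilon(e_1) \sim a$, $h_\epsilon(e_n) \sim b$, with the support function in intermediate directions squeezed between $\min(a,b)/2$ and $a+b$ (via the diamond inscribed in the two-dimensional slice by central symmetry), we conclude $C^{-1} \le h_\epsilon \le C$ everywhere. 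The hardest step is the volume lower bound, which requires separate treatment of the potentially unbounded and bounded $h_\epsilon$ cases, and in which $\gamma > -1$ enters essentially, both to ensure $f_\epsilon\,dx$ does not concentrate on the equator and to guarantee a non-degenerate limit density.
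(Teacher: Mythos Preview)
Your overall strategy---bound the surface area and the volume of $K_{h_\epsilon}$ uniformly in $\epsilon$, and then convert into pointwise bounds on $h_\epsilon$ via the rotationally symmetric John ellipsoid---is the same as the paper's. The integral estimates for $\int_{\mathbb{S}^{n-1}} f_\epsilon$, the isoperimetric upper bound on the volume, and the final John-ellipsoid step are all fine. The difference lies in how you obtain the volume lower bound, and there your Case~1 has a real gap. You assert that $f_\epsilon$ carries mass of order $\epsilon^{\gamma+1}$ on an $\epsilon$-neighborhood of the equator and that this vanishes as $\epsilon_k\to 0$; but nothing links the needle's concentration scale to $\epsilon$, nor do you ever justify $\epsilon_k\to 0$. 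What is actually needed is that for every \emph{fixed} $\delta>0$ one has $\int_{\{|x_n|<\delta\}} f_\epsilon \le C\,\delta^{\gamma+1}$ uniformly in $\epsilon$ (this does hold, and uses $\gamma>-1$), and then a quantitative statement that a long needle forces $S(K,\{|x_n|<\delta\})\ge c_1/2$, yielding a contradiction. You also need a sentence ruling out the flat-disk alternative $r_{1,\epsilon}\to\infty$, which follows from the Cauchy-type bound $S(K_{h_\epsilon})\ge 2V_{n-1}(K_{h_\epsilon}|e_n^\perp)\ge c\,r_{1,\epsilon}^{\,n-1}$ together with $S\le C_1$.

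The paper avoids the entire contradiction argument by a direct computation. From $V(K_{h_\epsilon})=\tfrac{1}{n}\int_{\mathbb{S}^{n-1}} h_\epsilon\det(\nabla^2 h_\epsilon+h_\epsilon I)$ and the John-ellipsoid lower bound $h_\epsilon(x)\ge \tfrac{1}{\sqrt{2}\,n}\bigl(r_{1,\epsilon}|x'|+r_{n,\epsilon}|x_n|\bigr)$, together with an elementary uniform positive lower bound on the remaining integrals (using $\alpha,\beta\ge 0$ and $\gamma>-1$), one gets immediately
\[
V(K_{h_\epsilon})\ \ge\ c\,(r_{1,\epsilon}+r_{n,\epsilon}).
\]
Combined with $V(K_{h_\epsilon})\le C$ (isoperimetric), this gives $\max h_\epsilon\le r_{1,\epsilon}+r_{n,\epsilon}\le C$ in one stroke, and then $\min h_\epsilon\ge c$ follows from $\max h_\epsilon\le c\,V(K_{h_\epsilon})\le C(\max h_\epsilon)^{n-1}\min h_\epsilon$. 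This is shorter, uses neither the lower bound on $S(K_{h_\epsilon})$ nor any compactness/weak-limit reasoning, and sidesteps the needle/disk case analysis entirely; you may prefer to adopt it.
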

	\begin{proof}
		By \eqref{Minkowski}, $-1<\gamma,$ one has
		$$
		\begin{aligned}
		\operatorname{area}\left(\partial K_{h_\epsilon}\right) 
		&=\int_{\mathbb{S}^{n-1}}\operatorname{det}\left(\nabla^2 h+h I\right)(x)dx\\
		&=\int_{\mathbb{S}^{n-1}}\left|x^{\prime}\right|^\alpha\left|x_n\right|^\beta\left(\epsilon^2\left|x^{\prime}\right|^2+\left|x_n\right|^2\right)^{\frac{\gamma-\beta}{2}}dx\\
		& \leqslant \int_{\mathbb{S}^{n-1}}\left(\epsilon^2\left|x^{\prime}\right|^2+\left|x_n\right|^2\right)^{\gamma/2}dx\\
		& \leqslant\left\{\begin{array}{lll}
		\int_{\mathbb{S}^{n-1}}\left|x_n\right|^{\gamma/2} \mathrm{~d} x & \text { when } -1<\gamma<0 \\
		\int_{\mathbb{S}^{n-1}} \mathrm{~d} x & \text { when } 0\leqslant\gamma
		\end{array}\right. \\
		& \leq C,
		\end{aligned}
		$$
		where $C$ is a positive constant depending on $n, \gamma$ but independent of $\epsilon$. 
		By John's lemma, 
		$$
		\frac{1}{n} E_\epsilon \subset K_{h_\epsilon} \subset E_\epsilon
		$$
		where $E_\epsilon$ is the minimum ellipsoid of $K_{h_\epsilon}$. Then, this implies that
		$$
		\frac{1}{n} h_{E_\epsilon} \leq h_\epsilon \leq h_{E_\epsilon} \quad \text { on } \quad \mathbb{S}^{n-1},
		$$
		where $h_{E_\epsilon}$ is the support function of $E_\epsilon$. Since $K_{h_\epsilon}$ is rotationally symmetric and even, we have that $E_\epsilon$ is also rotationally symmetric and even and centred at the origin. Let $r_{1 \epsilon}, \cdots, r_{n \epsilon}$ be the lengths of the semi-axes of $E_\epsilon$ along the $x_1, \cdots, x_n$ axes. Then, $r_{1 \epsilon}=\cdots=r_{n-1 ; \epsilon}$ and
		$$
		h_{E_\epsilon}(x)=\sqrt{r_{1 \epsilon}^2\left|x^{\prime}\right|^2+r_{n \epsilon}^2 x_n^2}, \quad \forall x \in \mathbb{S}^{n-1} .
		$$
		By \eqref{Minkowski}, one has
		$$
		\begin{aligned}
		\Vol(K_{h_\epsilon})&=\frac{1}{n}\int_{\mathbb{S}^{n-1}}h_\epsilon \operatorname{det}\left(\nabla^2 h_\epsilon+h_\epsilon I\right)(x)dx\\
		&=\frac{1}{n}\int_{\mathbb{S}^{n-1}}h_\epsilon\left|x^{\prime}\right|^\alpha\left|x_n\right|^\beta\left(\epsilon^2\left|x^{\prime}\right|^2+\left|x_n\right|^2\right)^{\frac{\gamma-\beta}{2}}dx\\
		&\geqslant \frac{1}{n}\int_{\mathbb{S}^{n-1}}h_\epsilon\left|x^{\prime}\right|^\alpha\left|x_n\right|^\beta\left(\epsilon^2\left|x^{\prime}\right|^2+\left|x_n\right|^2\right)^{\frac{\gamma}{2}}dx\\
		& \geqslant \frac{1}{n}\left\{\begin{array}{lll}
		\int_{\mathbb{S}^{n-1}} h_\epsilon(x)\left|x^{\prime}\right|^\alpha\left|x_n\right|^\beta \mathrm{d} x & \text { when } & -1<\gamma<0, \\
		\int_{\mathbb{S}^{n-1}} h_\epsilon(x)\left|x^{\prime}\right|^\alpha\left|x_n\right|^{\beta+\gamma} \mathrm{~d} x & \text { when } & \gamma\geqslant 0 .
		\end{array}\right. \\
		\end{aligned}
		$$
		Since
		$$
		h_\epsilon(x) \geq \frac{1}{\sqrt{2} n}\left(r_{1 \epsilon}\left|x^{\prime}\right|+r_{n \epsilon}\left|x_n\right|\right), \quad \forall x \in \mathbb{S}^{n-1}.
		$$
		Recall that $\alpha,\beta$ are nonnegative, then we have
		$$
		\begin{aligned}
		\Vol(K_{h_\epsilon})
		& \geqslant \frac{1}{n}\left\{\begin{array}{lll}
		\int_{\mathbb{S}^{n-1}} \frac{1}{\sqrt{2} n}\left(r_{1 \epsilon}\left|x^{\prime}\right|+r_{n \epsilon}\left|x_n\right|\right)\left|x^{\prime}\right|^\alpha\left|x_n\right|^\beta \mathrm{d} x & \text { when } & -1<\gamma<0, \\
		\int_{\mathbb{S}^{n-1}} \frac{1}{\sqrt{2} n}\left(r_{1 \epsilon}\left|x^{\prime}\right|+r_{n \epsilon}\left|x_n\right|\right)\left|x^{\prime}\right|^\alpha\left|x_n\right|^{\beta+\gamma} \mathrm{~d} x & \text { when } & \gamma\geqslant 0 .
		\end{array}\right. \\
		&\geqslant c (r_{1 \epsilon}+r_{n \epsilon}),
		\end{aligned}
		$$
		therefore, by the isoperimetric inequality, we obtain that
		$$
		\operatorname{vol}\left(K_{h_\epsilon}\right) \leq C_n \operatorname{area}\left(\partial K_{h_\epsilon}\right)^{\frac{n}{n-1}} \leq C ,
		$$
		and hence
		\begin{equation}\label{up}
		\max h_\epsilon<r_{1 \epsilon}+r_{n \epsilon}\leqslant c\Vol(K_{h_\epsilon})\leqslant C.
		\end{equation}
		On the other hand, we have
		$$
		\begin{aligned}
		\operatorname{vol}\left(K_{h_\epsilon}\right) & \leq \operatorname{vol}\left(E_\epsilon\right) \\
		& =\kappa_n r_{1 \epsilon}^{n-1} r_{n \epsilon} \\
		& \leq C_n\left(\max h_\epsilon\right)^{n-1} \cdot \min h_\epsilon
		\end{aligned}
		$$
		where $\kappa_n$ is the volume of the unit ball in $\mathbb{R}^n$.
		Since  
		$$
		\max h_\epsilon \leqslant c\Vol(K_{h_\epsilon}) \leq C\left(\max h_\epsilon\right)^{n-1} \cdot \min h_\epsilon
		$$
		namely
		$$
		1 \leq C\left(\max h_\epsilon\right)^{n-2} \cdot \min h_\epsilon
		$$
		By \eqref{up}, we obtain the bound of $h_\epsilon$ from below.
		Now, the proof of this lemma is completed.
	\end{proof}
	
	Define
	\begin{equation}\label{H epsilon}
	H_\epsilon(x):=\epsilon^{\frac{n-p-4-\gamma+q}{n-p+q-1}}\left|M_\epsilon^{-1} x\right| \cdot h_\epsilon\left(\frac{M_\epsilon^{-1} x}{\left|M_\epsilon^{-1} x\right|}\right), \quad x \in \mathbb{S}^{n-1} .
	\end{equation}
	\begin{lemma}
		The function $H_\epsilon$ satisfies the equation
		\begin{equation}\label{epsilon EQ}
		\mbox{det}(\nabla^2H_\epsilon+H_\epsilon) =\frac{H_\epsilon^{p-1}f_\epsilon}{\widetilde{V}_{q-1}([H_\epsilon],\bar{\nabla} H_\epsilon)},\text{  on }\mathbb{S}^{n-1},
		\end{equation} 
		where
		\begin{equation}
		f_\epsilon(x):=h_\epsilon\left(x_\epsilon\right)^{1-p}\left|X^{\prime}\right|^\alpha\left|x_n\right|^\beta\left|N_\epsilon X\right|^{-\gamma-\alpha-n-p}\frac{1}{n}\int_{S^{n-1}}\left|N_\epsilon y\right|^{q-1-n}\rho_{K_{h_\epsilon},\bar{\nabla}h_{\epsilon}(x_\epsilon)}^{q-1}(y) dy,
		\end{equation}
		and
		$$
		x_\epsilon=\frac{M_\epsilon^{-1} x}{\left|M_\epsilon^{-1} x\right|} \quad \text { and } \quad N_\epsilon=\epsilon M_\epsilon^{-1}=\left(\begin{array}{cc}
		I & 0 \\
		0 & \epsilon
		\end{array}\right) .
		$$
	\end{lemma}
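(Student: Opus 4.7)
The plan is to read \eqref{epsilon EQ} as the transformation law of \eqref{Minkowski} under the linear map $A:=\epsilon^{a}M_{\epsilon}^{-1}$, with $a=(n-p-4-\gamma+q)/(n-p+q-1)$. Since $h_{\epsilon}$ is $1$-homogeneous on $\mathbb{R}^{n}$, formula \eqref{H epsilon} rewrites as $H_{\epsilon}(x)=\epsilon^{a}h_{\epsilon}(M_{\epsilon}^{-1}x)$, and hence $[H_{\epsilon}]=AK_{h_{\epsilon}}$. A short computation using $M_{\epsilon}N_{\epsilon}=\epsilon I$ and $M_{\epsilon}^{-1}x=N_{\epsilon}x/\epsilon$ gives $x_{\epsilon}=M_{\epsilon}^{-1}x/|M_{\epsilon}^{-1}x|=N_{\epsilon}x/|N_{\epsilon}x|$, so $x_{\epsilon}$ is the outer unit normal of $K_{h_{\epsilon}}$ (at the point $\bar{\nabla}h_{\epsilon}(x_{\epsilon})$) that corresponds under $A$ to the outer unit normal $x$ of $[H_{\epsilon}]$.

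The three transformation identities I would record are as follows. From $h_{AK}(v)=h_{K}(A^{T}v)$ together with the change of variables $u=A^{T}v/|A^{T}v|$ on the sphere (whose Jacobian is $|\det A|/|A^{T}v|^{n}$), one obtains
\begin{equation*}
\det(\nabla^{2}h_{AK}+h_{AK}I)(v)=\frac{(\det A)^{2}}{|A^{T}v|^{n+1}}\,\det(\nabla^{2}h_{K}+h_{K}I)\!\left(\frac{A^{T}v}{|A^{T}v|}\right).
\end{equation*}
Because $\bar{\nabla}h_{\epsilon}$ is $0$-homogeneous and equals the boundary point of $K_{h_{\epsilon}}$ carrying the prescribed outer normal, $\bar{\nabla}H_{\epsilon}(x)=A\,\bar{\nabla}h_{\epsilon}(x_{\epsilon})$. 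Finally, the substitution $y=Au/|Au|$ in \eqref{dual} yields
\begin{equation*}
\widetilde{V}_{q-1}(AK,Az_{0})=\frac{|\det A|}{n}\int_{\mathbb{S}^{n-1}}\rho_{K,z_{0}}(u)^{q-1}\,|Au|^{q-1-n}\,du.
\end{equation*}

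Plugging \eqref{Minkowski} into the first identity at $v=x$ and simplifying with $|x_{\epsilon}'|=|x'|/|N_{\epsilon}x|$, $|(x_{\epsilon})_{n}|=\epsilon|x_{n}|/|N_{\epsilon}x|$, $|M_{\epsilon}x_{\epsilon}|=\epsilon/|N_{\epsilon}x|$, $|A^{T}x|=\epsilon^{a-1}|N_{\epsilon}x|$, and $\det A=\epsilon^{na-n+1}$, one arrives at
\begin{equation*}
\det(\nabla^{2}H_{\epsilon}+H_{\epsilon}I)(x)=\epsilon^{a(n-1)-n+3+\gamma}\,|x'|^{\alpha}|x_{n}|^{\beta}\,|N_{\epsilon}x|^{-n-1-\alpha-\gamma}.
\end{equation*}
On the right-hand side of \eqref{epsilon EQ}, the factor $h_{\epsilon}(x_{\epsilon})^{p-1}$ coming from $H_{\epsilon}^{p-1}$ cancels $h_{\epsilon}(x_{\epsilon})^{1-p}$ in $f_{\epsilon}$, while the integral appearing in $f_{\epsilon}$ cancels exactly the integral produced by $\widetilde{V}_{q-1}([H_{\epsilon}],\bar{\nabla}H_{\epsilon}(x))$ via the two rules above. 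What remains is $|x'|^{\alpha}|x_{n}|^{\beta}|N_{\epsilon}x|^{-n-1-\alpha-\gamma}$ times a power $\epsilon^{(a-1)(p-1)-a(q-1)+(q-2)}$, and matching the two $\epsilon$-exponents reduces to the single linear relation $a(n-p+q-1)=n-p-4-\gamma+q$, which is precisely the defining relation for $a$ in \eqref{H epsilon}.

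The main obstacle is the transformation rule for $\widetilde{V}_{q-1}(K,z)$ when $z\in\partial K$: the Riesz-potential form of \eqref{dual} is convenient for $q>1$, but to handle general $q>0$ and the boundary point $z=\bar{\nabla}h_{\epsilon}(x_{\epsilon})\in\partial K_{h_{\epsilon}}$ cleanly one should derive the change of variables directly from the radial-function integral over the domain $\{u:\rho_{K,z}(u)>0\}$ (which degenerates to a hemisphere for boundary points). The rest is a tedious but mechanical bookkeeping of $\epsilon$-powers, designed by the very choice of $a$ to collapse into the required identity.
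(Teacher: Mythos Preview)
Your proposal is correct and follows essentially the same route as the paper: both proofs rest on the transformation law $\det(\nabla^{2}h_{AK}+h_{AK}I)(x)=(\det A)^{2}|A^{T}x|^{-n-1}\det(\nabla^{2}h_{K}+h_{K}I)(A^{T}x/|A^{T}x|)$ together with the change-of-variables formula for $\widetilde{V}_{q-1}(AK,Az_{0})$, and then match the $\epsilon$-exponents. The only organizational difference is that the paper first introduces the intermediate function $u_{\epsilon}=h_{\epsilon}(M_{\epsilon}^{-1}\cdot)$ and afterwards scales by $\epsilon^{a}$, whereas you apply the single linear map $A=\epsilon^{a}M_{\epsilon}^{-1}$ in one stroke; the computations are otherwise identical, and your remark about handling the boundary point $z\in\partial K$ via the radial-function integral is exactly what the paper does in \eqref{quermass}.
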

	\begin{proof}
		Let
		$$
		u_\epsilon(x):=\left|M_\epsilon^{-1} x\right| \cdot h_\epsilon\left(\frac{M_\epsilon^{-1} x}{\left|M_\epsilon^{-1} x\right|}\right)
		$$
		By the invariance of the quantity $h_\epsilon^{n+1} \operatorname{det}\left(\nabla^2 h_\epsilon+h_\epsilon I\right)$ under linear transformations, see [\cite{WZ06}, Proposition 7.1] or formula (2.12) in \cite{LW2013}, we have
		\begin{equation}\label{det u}
		\operatorname{det}\left(\nabla^2 u_\epsilon+u_\epsilon I\right)(x)=\operatorname{det}\left(\nabla^2 h_\epsilon+h_\epsilon I\right)\left(\frac{M_\epsilon^{-1} x}{\left|M_\epsilon^{-1} x\right|}\right) \cdot \frac{\left(\operatorname{det} M_\epsilon^{-1}\right)^2}{\left|M_\epsilon^{-1} x\right|^{n+1}} .
		\end{equation}
		Since
		$$
		x_\epsilon=\frac{M_\epsilon^{-1} x}{\left|M_\epsilon^{-1} x\right|}=\frac{\left(\epsilon^{-1} x^{\prime}, x_n\right)}{\left|M_\epsilon^{-1} x\right|}=\frac{\left(x^{\prime}, \epsilon x_n\right)}{\left|N_\epsilon x\right|} .
		$$
		By \eqref{Minkowski}, we then have
		$$
		\begin{aligned}
		\operatorname{det}\left(\nabla^2 h_\epsilon+h_\epsilon I\right)\left(\frac{M_\epsilon^{-1} x}{\left|M_\epsilon^{-1} x\right|}\right) & =\frac{\left|x^{\prime}\right|^\alpha}{\left|N_\epsilon x\right|^\alpha} \cdot \frac{\left|\epsilon x_n\right|^\beta}{\left|N_\epsilon x\right|^\beta} \cdot\left(\frac{1}{\left|M_\epsilon^{-1} x\right|}\right)^{\gamma-\beta} \\
		& =\frac{\left|x^{\prime}\right|^\alpha}{\left|N_\epsilon x\right|^\alpha} \cdot \frac{\left|\epsilon x_n\right|^\beta}{\left|N_\epsilon x\right|^\beta} \cdot\left(\frac{\epsilon}{\left|N_\epsilon x\right|}\right)^{\gamma-\beta} \\
		& =\epsilon^{\gamma}\left|x^{\prime}\right|^\alpha\left|x_n\right|^\beta\left|N_\epsilon x \right|^{-\gamma-\alpha} .
		\end{aligned}
		$$
		Applying this into \eqref{det u}, we obtain
		$$
		\begin{aligned}
		\operatorname{det}\left(\nabla^2 u_\epsilon+u_\epsilon I\right)(x) & =\epsilon^{\gamma}\left|x^{\prime}\right|^\alpha\left|x_n\right|^\beta\left|N_\epsilon x\right|^{-\gamma-\alpha} \cdot \frac{\left(\epsilon^{1-n}\right)^2 \epsilon^{n+1}}{\left|N_\epsilon x\right|^{n+1}} \\
		& =\epsilon^{\gamma+3-n}\left|x^{\prime}\right|^\alpha\left|x_n\right|^\beta\left|N_\epsilon x\right|^{-\gamma-\alpha-n-1} .
		\end{aligned}
		$$
		By the definition of $u_\epsilon$, we have
		$$
		\begin{gathered}
		u_\epsilon(x)=\epsilon^{-1}\left|N_\epsilon x\right| \cdot h_\epsilon\left(x_\epsilon\right), \\
		\left(\bar{\nabla} u_\epsilon\right)(x)=M_\epsilon^{-T}\left(\bar{\nabla} h_\epsilon\right)\left(x_\epsilon\right)=\epsilon^{-1} N_\epsilon\left(\bar{\nabla} h_\epsilon\right)\left(x_\epsilon\right) 
		\end{gathered}
		$$
		then
		$$
		\begin{aligned}
		\rho_{K_{u_\epsilon},\bar{\nabla}u_{\epsilon}(x)}(y)
		&=\max\{\lambda\in \mathbb{R}:\lambda y\in K_{u_\epsilon}-\bar{\nabla}u_{\epsilon}(x)\}\\
		&=\max\{\lambda\in \mathbb{R}:\lambda y\in M_\epsilon^{-1}K_{h_\epsilon}-M_\epsilon^{-1}M_\epsilon \bar{\nabla}u_{\epsilon}(x)\}\\
		&=\max\{\lambda\in \mathbb{R}:M_\epsilon^{-1}M_\epsilon \lambda y\in M_\epsilon^{-1}(K_{h_\epsilon}-M_\epsilon \bar{\nabla}u_{\epsilon}(x))\}\\
		&=\rho_{K_{h_\epsilon},M_\epsilon( \bar{\nabla}u_{\epsilon}(x) )}(M_\epsilon(y))\\
		&=\rho_{K_{h_\epsilon},\bar{\nabla}h_{\epsilon}(x_\epsilon) )}(M_\epsilon(y))
		\end{aligned}
		$$
		Hence
		\begin{equation}\label{quermass}
		\begin{aligned}
		\widetilde{V}_{q-1}\left(K_{u_\epsilon},\bar{\nabla}u_{\epsilon}(x)\right)
		&=\frac{1}{n}\int_{\mathbb{S}^{n-1}}\left(\rho_{K_{u_\epsilon},\bar{\nabla}u_{\epsilon}(x)}(u)\right)^{q-1}d u\\
		&=\frac{1}{n}\int_{\mathbb{S}^{n-1}}\left(\left|M_\epsilon u\right|^{-1} \cdot \rho_{K_{h_\epsilon},\bar{\nabla}h_{\epsilon}(x_\epsilon)}\left(\frac{M_\epsilon (u)}{\left|M_\epsilon (u)\right|}\right)\right)^{q-1}d u\\
		&=\frac{\epsilon ^{2-q}}{n}\int_{\mathbb{S}^{n-1}}\left|N_\epsilon y\right|^{q-1-n} \rho_{K_{h_\epsilon}, \bar{\nabla}h_{\epsilon}(x_\epsilon) )}^{q-1}(y)d y,
		\end{aligned}
		\end{equation}
		where we apply the integration by substitution
		$$
		\begin{gathered}
		y=\frac{M_\epsilon (u)}{\left|M_\epsilon (u)\right|}=\frac{\epsilon u', u_n}{\left|M_\epsilon (u)\right|}\\
		u=\frac{M_\epsilon^{-1} y}{\left|M_\epsilon^{-1} y\right|}=\frac{\frac{1}{\epsilon}(y', \epsilon y_n)}{\left|M_\epsilon^{-1} y\right|}.
		\end{gathered}
		$$
		then
		$$
		\begin{aligned}
		u_\epsilon^{1-p}\widetilde{V}_{q-1}([u_\epsilon],\bar{\nabla} u_\epsilon)\mbox{det}(\nabla^2u_\epsilon+u_\epsilon)
		&=\epsilon^{p-q+\gamma-n+4}\left|N_\epsilon (x)\right|^{-\gamma-\alpha-n-p}h_\epsilon^{1-p}(x_\epsilon)\left|x^{\prime}\right|^\alpha\left|x_n\right|^\beta\\ 
		&\quad \frac{1}{n}\int_{\mathbb{S}^{n-1}}\left|N_\epsilon y\right|^{q-1-n} \rho_{K_{h_\epsilon},\bar{\nabla}h_{\epsilon}(x_\epsilon) }^{q-1}(y)d y,
		\end{aligned}
		$$
		Let $H_\epsilon(x):=\epsilon^{\frac{n-p-4-\gamma+q}{n-p+q-1}}u_\epsilon,$ we have
		$$
		\begin{aligned}
		H_\epsilon^{1-p}\widetilde{V}_{q-1}([H_\epsilon],\bar{\nabla} H_\epsilon)\mbox{det}(\nabla^2H_\epsilon+H_\epsilon I)
		&=\left|N_\epsilon (x)\right|^{-\gamma-\alpha-n-p}h_\epsilon^{1-p}(x_\epsilon)\left|x^{\prime}\right|^\alpha\left|x_n\right|^\beta\\ 
		&\quad \frac{1}{n}\int_{\mathbb{S}^{n-1}}\left|N_\epsilon y\right|^{q-1-n} \rho_{K_{h_\epsilon},\bar{\nabla}h_{\epsilon}(x_\epsilon) }^{q-1}(y)d y,
		\end{aligned}
		$$
	\end{proof}
	Since $|y'|\leqslant \left|N_\epsilon y\right|=\sqrt{|y'|^2+\epsilon|y_n|^2}\leqslant 1,q\leqslant 1+n$ we have 
	$$
	\begin{aligned}
	\frac{1}{n}\int_{\mathbb{S}^{n-1}}\left|N_\epsilon y\right|^{q-1-n} \rho_{K_{h_\epsilon},\bar{\nabla}h_{\epsilon}(x_\epsilon) }^{q-1}(y)d y
	&\geqslant \frac{1}{n}\int_{\mathbb{S}^{n-1}} \rho_{K_{h_\epsilon},\bar{\nabla}h_{\epsilon}(x_\epsilon) }^{q-1}(y)d y\\ 
	&\geqslant c.
	\end{aligned}
	$$
	The second inequality is due to the uniform boundness of $h_\epsilon,$ \eqref{h epsilon bound}. Indeed, Since 
	$$
	\int_{\mathbb{S}^{n-1}} \rho_{K_{h_\epsilon},\bar{\nabla}h_{\epsilon}(x_\epsilon) }^{q-1}(y)d y=\frac{1}{2} \int_{S^{n-1}} X_{K_{h_\epsilon}}(\bar{\nabla}h_{\epsilon}(x_\epsilon) , y)^{q-1} d y
	$$
	where the parallel $X$-ray of $K_{h_\epsilon}$ is the nonnegative function on $\mathbb{R}^n \times S^{n-1}$ defined by
	$$
	{K_{h_\epsilon}}(z , u)=|K_{h_\epsilon}\cap(z+\mathbb{R} u)|, \quad z \in \mathbb{R}^n, \quad u \in S^{n-1} .
	$$
	We can choose a Borel set $Z\subset \mathbb{S}^{n-1}$ satisfying $|Z|\geqslant c,$ here, $c$ is a universal constant independent with $\epsilon,$ and $\forall y\in Z$ we have $$X_{K_{h_\epsilon}}(\bar{\nabla}h_{\epsilon}(x_\epsilon) , y)\geqslant c$$ for some uniform constant $c$. Indeed, in dimension 2, since $K_{h_\epsilon}$ is pinched between two bounded balls, $\forall z\in \partial K_{h_\epsilon},$ there is a Borel set $Z\subset \mathbb{S}^{1}$ with $\arcsin(\frac{\sqrt{3}c}{2C})\leqslant |Z|\leqslant \frac{2\pi}{3}$ such that $y\in Z,X_{K_{h_\epsilon}}(z, y)\geqslant c,$ here, the constant $c(C)$ is the radius of the inner(outer) ball of $K_{\epsilon}.$ The higher dimensional case is analogous, just do a rotation. 
	
	Then combining with \eqref{h epsilon bound}, we have $ f_\epsilon(x)\geqslant c_1 \left|X^{\prime}\right|^\alpha\left|x_n\right|^\beta\left|N_\epsilon X\right|^{-\gamma-\alpha-n-p}.$ Hence
	$$
	\begin{aligned}
	\int_{\mathbb{S}^{n-1}}f_\epsilon(x)d x 
	&\geqslant c_1\int_{\mathbb{S}^{n-1}}\left|x^{\prime}\right|^\alpha\left|x_n\right|^\beta\left|N_\epsilon x\right|^{-\gamma-\alpha-n-p}dx\\
	&=2c_1\omega_{n-2}\int_{0}^{\frac{\pi}{2}}\sin\theta^\alpha \cos\theta^\beta\left(\sin\theta^2+\epsilon^2\cos\theta^2\right)^{-\frac{\gamma+\alpha+n+p}{2}}\sin\theta^{n-2} d\theta\\
	&\geqslant c_2 \int_{0}^{\frac{\pi}{4}}\theta^{\alpha+n-2} \left(\theta^2+\epsilon^2\right)^{-\frac{\gamma+\alpha+n+p}{2}}d\theta\\
	&+c_2 \int_{\frac{\pi}{4}}^{\frac{\pi}{2}}(\frac{\pi}{2}-\theta)^\beta\left(1+\epsilon^2(\frac{\pi}{2}-\theta)^2\right)^{-\frac{\gamma+\alpha+n+p}{2}} d\theta \\
	&\geqslant c_2 \int_{0}^{\frac{\pi}{4}}t^\beta\left(1+\epsilon^2 t^2\right)^{-\frac{\gamma+\alpha+n+p}{2}} dt\\
	&\geqslant c_2 \left\{\begin{array}{lll}
	\int_{0}^{\frac{\pi}{4}}t^\beta 2^{-\frac{\gamma+\alpha+n+p}{2}} dt & \text { when } 0\leqslant \gamma+\alpha+n+p \\
	\int_{0}^{\frac{\pi}{4}}t^\beta dt & \text { when } 0> \gamma+\alpha+n+p 
	\end{array}\right. \\
	&\geqslant c,
	\end{aligned}
	$$
	where the third inequality is due to the integration by substitution $t=\frac{\pi}{2}-\theta,$ and we throw the first item; the fourth inequality is because that $0<\epsilon<1/2,1 \leqslant 1+\epsilon^2 t^2<2.$\\
	On the other hand, by \eqref{h epsilon bound}, we have
	$$
	f_\epsilon(x)\leqslant C \left|X^{\prime}\right|^\alpha\left|x_n\right|^\beta\left|N_\epsilon X\right|^{-\gamma-\alpha-n-p}\frac{1}{n}\int_{S^{n-1}}\left|N_\epsilon y\right|^{q-1-n}\rho_{K_{h_\epsilon},\bar{\nabla}h_{\epsilon}(x_\epsilon)}^{q-1}(y) dy,
	$$
	first, we estimate the integral, since $|y'|\leqslant \left|N_\epsilon y\right|=\sqrt{|y'|^2+\epsilon|y_n|^2}\leqslant 1,$ and by \eqref{h epsilon bound} we have
	$$
	\begin{aligned}
	\int_{S^{n-1}}\left|N_\epsilon y\right|^{q-1-n}\rho_{K_{h_\epsilon},\bar{\nabla}h_{\epsilon}(x_\epsilon))}^{q-1}(y) dy
	&\leqslant C\int_{S^{n-1}}\left|N_\epsilon y\right|^{q-1-n}dy\\
	&\leqslant C 2\omega_{n-2}\int_{0}^{\frac{\pi}{2}}\left(\sin\theta^2+\epsilon^2\cos\theta^2\right)^{\frac{q-1-n}{2}}\sin\theta^{n-2} d\theta ,
	\end{aligned}
	$$
	Since 
	$$
	\begin{aligned}
	\int_{0}^{\frac{\pi}{2}}\left(\sin\theta^2+\epsilon^2\cos\theta^2\right)^{\frac{q-1-n}{2}}\sin\theta^{n-2} d\theta
	&\leqslant C\int_{0}^{\frac{\pi}{4}}\theta^{n-2} \left(\theta^2+\epsilon^2\right)^{(q-1-n)/2}d\theta\\
	&+C \int_{\frac{\pi}{4}}^{\frac{\pi}{2}}\left(1+\epsilon^2(\frac{\pi}{2}-\theta)^2\right)^{(q-1-n)/2} d\theta \\
	&\leqslant C \left[\int_{0}^{\epsilon}\theta^{n-2} \epsilon^{q-1-n}d\theta+\int_{\epsilon}^{\frac{\pi}{4}}\theta^{q-3} d\theta+\frac{\pi}{4}\right]\\
	&\leqslant C\left\{\begin{array}{lll}
	\frac{\epsilon^{q-2}}{n-1}+\frac{\pi}{4}+C & \text { when } q>2 \\
	\frac{1}{n-1}+\frac{\pi}{4}+|\log\epsilon| & \text { when } q=2\\
	c\epsilon^{q-2} & \text { when } q<2
	\end{array}\right. \\
	&\leqslant C\left\{\begin{array}{lll}
	C & \text { when } q>2 \\
	|\log\epsilon| & \text { when } q=2\\
	\epsilon^{q-2} & \text { when } q<2
	\end{array}\right. \\
	\end{aligned}
	$$
	Since we have $2<q\leqslant n+1,$ then
	$$
	\begin{aligned}
	f_\epsilon(x)&\leqslant C\left|x^{\prime}\right|^\alpha\left|x_n\right|^\beta\left|N_\epsilon X\right|^{-\gamma-\alpha-n-p}\\
	&\leqslant C\left\{\begin{array}{lll}
	\left|x^{\prime}\right|^\alpha\left|x_n\right|^\beta & \text { when } 0> \gamma+\alpha+n+p  \\
	\left|x^{\prime}\right|^{-\gamma-p-n}\left|x_n\right|^\beta & \text { when } 0\leqslant \gamma+\alpha+n+p\\
	\end{array}\right. \\
	\end{aligned}
	$$
	and $\|f_\epsilon\|_{L^1(\mathbb{S}^{n-1})}\geqslant c.$ When $0\leqslant \gamma+\alpha+n+p,$ we need $-1<\gamma<-1-\frac{p}{n+q-1},$ then $-\gamma-p-n$ satisfies the condition of $\alpha$ in Theorem \ref{B}. Hence, $f_\epsilon$ satisfies \eqref{f}.
	Now, we need to estimate the chord integral of $K_{H_\epsilon}.$
	\begin{lemma}\label{chordintegral 1}
		We have the estimate for the $q-$th chord integral of $K_{H_\epsilon}$ as follows:
		$$
		\begin{aligned}
		I_q(K_{H_\epsilon})
		&\leqslant C\left\{\begin{array}{lll}
		\epsilon ^{2-\frac{3+\gamma}{n-p+q-1}(q+n-1)} & \text { when } q>2 \\
		\epsilon ^{2-\frac{3+\gamma}{n-p+1}(n+1)}|\log\epsilon| & \text { when } q=2\\
		\epsilon ^{q-\frac{3+\gamma}{n-p+q-1}(q+n-1)} & \text { when } q<2
		\end{array}\right. \\
		\end{aligned}
		$$
	\end{lemma}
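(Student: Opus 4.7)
The plan is to reduce the estimate to the chord integral of an ellipsoid with one short semi-axis, to which Lemma~\ref{elliptic estimate} and Lemma~\ref{chord integral ineq} can be applied. Since $h_\epsilon$ is $1$-homogeneous, $u_\epsilon(x)=h_\epsilon(M_\epsilon^{-1}x)$, so $u_\epsilon$ is the support function of $M_\epsilon^{-1}K_{h_\epsilon}$; because $H_\epsilon=\epsilon^{a}u_\epsilon$ with $a=\frac{n-p-4-\gamma+q}{n-p+q-1}$, this gives $K_{H_\epsilon}=\epsilon^{a}M_\epsilon^{-1}K_{h_\epsilon}$. The homogeneity $I_q(tK)=t^{n+q-1}I_q(K)$ then produces
$$
I_q(K_{H_\epsilon})=\epsilon^{a(n+q-1)}\,I_q\bigl(M_\epsilon^{-1}K_{h_\epsilon}\bigr),
$$
and a direct computation yields $(a-1)(n+q-1)=-\frac{(3+\gamma)(n+q-1)}{n-p+q-1}$, matching the first exponent appearing in each of the three cases. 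It therefore suffices to extract an additional factor of $\epsilon^{2}$, $\epsilon^{q}$, or $\epsilon^{1-n}|\log\epsilon|$ from $I_q(M_\epsilon^{-1}K_{h_\epsilon})$.

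For this, the uniform bound \eqref{h epsilon bound} gives $K_{h_\epsilon}\subset B_{C}(0)$, so $M_\epsilon^{-1}K_{h_\epsilon}$ is contained in the ellipsoid $E_\epsilon$ with semi-axes $(C\epsilon^{-1},\ldots,C\epsilon^{-1},C)$. Monotonicity of the parallel $X$-ray, and hence of $I_q$, reduces matters to bounding $I_q(E_\epsilon)$. Writing $E_\epsilon=(C\epsilon^{-1})\widetilde{E}_\epsilon$, where $\widetilde{E}_\epsilon$ has, after reordering, semi-axes $0<a_1=\epsilon\leq a_2=\cdots=a_n=1$, homogeneity again gives $I_q(E_\epsilon)=(C\epsilon^{-1})^{n+q-1}I_q(\widetilde{E}_\epsilon)$. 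For non-integer $q$, Lemma~\ref{elliptic estimate} applies: taking $m=\lfloor q\rfloor\geq 2$ when $q>2$ produces $I_q(\widetilde{E}_\epsilon)\leq c\,(a_1\cdots a_m)^{2}a_m^{q-m-1}a_{m+1}\cdots a_n=c\epsilon^{2}$, and taking $m=1$ when $1<q<2$ produces $I_q(\widetilde{E}_\epsilon)\leq c\,a_1^{q}=c\epsilon^{q}$. Multiplying by the prefactor $\epsilon^{a(n+q-1)}(C\epsilon^{-1})^{n+q-1}$ then gives the first and third cases.

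The main obstacle is the integer case $q=2$, which lies precisely at the boundary where Lemma~\ref{elliptic estimate} fails. Here I would invoke Lemma~\ref{chord integral ineq} with $r=2$ and a non-integer $s\in(2,3)$,
$$
I_2(\widetilde{E}_\epsilon)\leq c(s,2)\,V(\widetilde{E}_\epsilon)^{(s-2)/(s-1)}\,I_s(\widetilde{E}_\epsilon)^{1/(s-1)},
$$
together with $V(\widetilde{E}_\epsilon)=\kappa_n\epsilon$ and $I_s(\widetilde{E}_\epsilon)\leq c_{s,2,n}\epsilon^{2}$. The constant $c_{s,2,n}$ in Lemma~\ref{elliptic estimate} blows up like $(s-2)^{-1}$ as $s\to 2^{+}$, so the inequality reads
$$
I_2(\widetilde{E}_\epsilon)\leq C(s-2)^{-1/(s-1)}\,\epsilon^{s/(s-1)}.
$$
Choosing $s-2\sim 1/|\log\epsilon|$ balances the blow-up of $(s-2)^{-1/(s-1)}\asymp|\log\epsilon|$ against $\epsilon^{s/(s-1)}=\epsilon^{2}\epsilon^{-(s-2)/(s-1)}\asymp\epsilon^{2}$, and yields $I_2(\widetilde{E}_\epsilon)\leq C\epsilon^{2}|\log\epsilon|$. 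Combining with the prefactor $\epsilon^{a(n+1)}(C\epsilon^{-1})^{n+1}$ produces the middle case, completing the proof.
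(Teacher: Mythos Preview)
Your approach is correct but takes a genuinely different route from the paper's. The paper never passes to a containing ellipsoid; instead it uses the identity
\[
I_q(K_{H_\epsilon})=\frac{2q}{(n+q-1)\omega_n}\int_{\partial K_{H_\epsilon}}H_\epsilon(\nu(z))\,\widetilde V_{q-1}(K_{H_\epsilon},z)\,d\mathcal H^{n-1}(z)
\]
together with the change-of-variables formula \eqref{quermass} already derived in the previous lemma,
\[
\widetilde V_{q-1}\bigl(K_{u_\epsilon},\bar\nabla u_\epsilon(x)\bigr)=\frac{\epsilon^{2-q}}{n}\int_{\mathbb S^{n-1}}|N_\epsilon y|^{q-1-n}\rho_{K_{h_\epsilon},\bar\nabla h_\epsilon(x_\epsilon)}^{q-1}(y)\,dy.
\]
Bounding the radial function via \eqref{h epsilon bound} and estimating the singular integral $\int_{\mathbb S^{n-1}}|N_\epsilon y|^{q-1-n}dy$ directly in spherical coordinates produces the three cases $q>2$, $q=2$, $q<2$; multiplication by $\Vol(K_{H_\epsilon})$ then yields the lemma. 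Your route via monotonicity of $I_q$ and Lemma~\ref{elliptic estimate} is more geometric and avoids the nonlocal term $\widetilde V_{q-1}$ entirely; the identification $K_{H_\epsilon}=\epsilon^{a}M_\epsilon^{-1}K_{h_\epsilon}$ and the exponent algebra $(a-1)(n+q-1)=-\frac{(3+\gamma)(n+q-1)}{n-p+q-1}$ are correct, as is the optimisation $s-2\sim 1/|\log\epsilon|$ at $q=2$.

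There is, however, one small gap. Lemma~\ref{elliptic estimate} is stated only for non-integer $q$, and its constant $c_{q,m,n}$ genuinely blows up at integers. You handle $q=2$ via Lemma~\ref{chord integral ineq}, but for integer $q\in\{3,\ldots,n\}$ the same interpolation device gives only $I_q(\widetilde E_\epsilon)\leqslant C\epsilon^{2}|\log\epsilon|$, not the log-free $C\epsilon^{2}$ asserted by the lemma for all $q>2$. The paper's approach has no such loss because $\int_\epsilon^{\pi/4}\theta^{q-3}\,d\theta$ is uniformly bounded for every $q>2$, integer or not. For the application to Theorem~\ref{A} the extra logarithm is harmless, since one only needs $I_q(K_{H_\epsilon})\to 0$; but as a proof of the lemma exactly as stated, your argument is incomplete at integer $q>2$.
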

	\begin{proof}
		Since 
		$$
		I_q(K_{H_\epsilon})=\frac{2 q}{(n+q-1)\omega_n} \int_{\partial K_{H_\epsilon}} H_\epsilon(\nu(z))\widetilde{V}_{q-1}(K_{H_\epsilon}, \bar{\nabla}H_\epsilon(\nu(z))) \mathrm{d} \mathcal{H}^{n-1}(z), 
		$$
		and recall the definition of $H_\epsilon$ \eqref{H epsilon} and by caculation above, we have
		$$
		\begin{aligned}
		\widetilde{V}_{q-1}\left(K_{H_\epsilon},\bar{\nabla}H_{\epsilon}(x)\right)
		&=\epsilon^{\frac{n-p-4-\gamma+q}{n-p+q-1}(q-1)}
		\widetilde{V}_{q-1}\left(K_{u_\epsilon},\bar{\nabla}u_{\epsilon}(x)\right)\\
		&=\frac{\epsilon ^{2-q+\frac{n-p-4-\gamma+q}{n-p+q-1}(q-1)}}{n}\int_{\mathbb{S}^{n-1}}\left|N_\epsilon y\right|^{q-1-n} \rho_{K_{h_\epsilon}, \bar{\nabla}h_{\epsilon}(x_\epsilon) )}^{q-1}(y)d y\\
		&\leqslant C\epsilon ^{2-q+\frac{n-p-4-\gamma+q}{n-p+q-1}(q-1)}\left\{\begin{array}{lll}
		1 & \text { when } q>2 \\
		|\log\epsilon| & \text { when } q=2\\
		\epsilon^{q-2} & \text { when } q<2
		\end{array}\right. \\
		\end{aligned}
		$$
		and combining the definition of $u_\epsilon,$ we have
		$$
		\Vol(K_{H_\epsilon})=\epsilon^{\frac{n-p-4-\gamma+q}{n-p+q-1}n+1-n}\Vol(K_{h_\epsilon})
		$$
		Since $\Vol(K_{h_\epsilon})\leqslant C,$ then we have
		$$
		\Vol(K_{H_\epsilon})\leqslant C\epsilon^{\frac{n-p-4-\gamma+q}{n-p+q-1}n+1-n}.
		$$
		Now, we can estimate the chord integral
		$$
		\begin{aligned}
		I_q(K_{H_\epsilon})
		&\leqslant C\epsilon ^{2-q+\frac{n-p-4-\gamma+q}{n-p+q-1}(q-1)}\Vol(K_{H_\epsilon})\left\{\begin{array}{lll}
		1 & \text { when } q>2 \\
		|\log\epsilon| & \text { when } q=2\\
		\epsilon^{q-2} & \text { when } q<2
		\end{array}\right. \\
		&\leqslant C\left\{\begin{array}{lll}
		\epsilon ^{2-\frac{3+\gamma}{n-p+q-1}(q+n-1)} & \text { when } q>2 \\
		\epsilon ^{2-\frac{3+\gamma}{n-p+1}(n+1)}|\log\epsilon| & \text { when } q=2\\
		\epsilon ^{q-\frac{3+\gamma}{n-p+q-1}(q+n-1)} & \text { when } q<2
		\end{array}\right. \\
		\end{aligned}
		$$
		The proof of this lemma is completed.  
	\end{proof}
	\begin{remark}\label{H chord}
		When $q>2$, choose $-1<\gamma<-\frac{2p}{n+q-1}-1.$ Then 
		$$
		I_q(K_{H_\epsilon})\leqslant \epsilon ^{2-\frac{3+\gamma}{n-p+q-1}(q+n-1)}\rightarrow 0,\text{ as }\epsilon\rightarrow 0^+.
		$$
		When $q=2$, choose $-1<\gamma<-\frac{2p}{n+1}-1.$ Then 
		$$
		I_q(K_{H_\epsilon})\leqslant \epsilon ^{2-\frac{3+\gamma}{n-p+1}(n+1)}|\log\epsilon|\rightarrow 0,\text{ as }\epsilon\rightarrow 0^+.
		$$
		When $0<q<2$, $p\in(-\infty, -\frac{(2-q)(n+q-1)}{q}),$ choose $-1<\gamma<-\frac{qp}{n+q-1}+q-3.$ Then 
		$$
		I_q(K_{H_\epsilon})\leqslant \epsilon ^{q-\frac{3+\gamma}{n-p+q-1}(q+n-1)}\rightarrow 0,\text{ as }\epsilon\rightarrow 0^+.
		$$
	\end{remark}
	
	\section{Variational solution}
	
	This section is dedicated to solving the $L_p$ chord Minkowski problem in a variational method. Let $C_{re}^+(\mathbb{S}^{n-1})$ denote the set of rotationally symmetric, even, and positive continuous functions on $\mathbb{S}^{n-1}.$ Let $f\in C_{re}^+(\mathbb{S}^{n-1})$ satisfies \eqref{f}, and real numbers $\alpha,\beta$ are as in Theorem \ref{B}.
	We consider the maximization problem
	\begin{equation}\label{max}
	\sup_{h\in C_{re}^+(\mathbb{S}^{n-1})}\left\{\Phi_p(h):=\int_{\mathbb{S}^{n-1}}f(x)h(x)^p, I_q(K_h)=1\right\}.
	\end{equation}
	\begin{lemma}
		Let $h_i\in C_{re}^+(\mathbb{S}^{n-1})$ be a maximizing sequence, then there exists some uniform constant $C$ such that
		\begin{equation}\label{key estimate}
		\frac{1}{C}\leqslant h_i\leqslant C \text{  as  }i\rightarrow +\infty.
		\end{equation}
	\end{lemma}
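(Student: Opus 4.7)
The plan is to reduce the two-sided bound on $h_i$ to bounding the two semi-axes of the John ellipsoid of $K_{h_i}$, and then to rule out every possible degeneration by showing it forces $\Phi_p(h_i) \to 0$, contradicting the maximality of $\{h_i\}$. Since each $h_i \in C_{re}^+(\mathbb{S}^{n-1})$ is rotationally symmetric in $x'$ and even, and since replacing $h_i$ by the support function of $K_i := [h_i]$ only increases $\Phi_p$ (because $h_{K_i} \leq h_i$ and $p < 0$), I may assume $h_i = h_{K_i}$. Then $K_i$ is origin-symmetric and rotationally symmetric about the $e_n$-axis, so its John ellipsoid has the same symmetries,
\begin{equation*}
E_i = \{x \in \mathbb{R}^n : |x'|^2/a_i^2 + x_n^2/b_i^2 \leq 1\},
\end{equation*}
and John's theorem gives $h_{E_i}/n \leq h_i \leq h_{E_i}$ with $h_{E_i}(v) = (a_i^2 |v'|^2 + b_i^2 v_n^2)^{1/2}$. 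Hence $1/C \leq h_i \leq C$ is equivalent to $a_i, b_i \in [1/C', C']$.

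The constraint $I_q(K_i) = 1$, together with the $(n+q-1)$-homogeneity of $I_q$ and the John sandwich, yields $I_q(E_i) \asymp 1$. Monotonicity of $I_q$ (via the inscribed and circumscribed balls of $E_i$) already produces $\min(a_i, b_i) \leq C_1$ and $\max(a_i, b_i) \geq c_1$, so the only dangerous scenario is simultaneous degeneration of one semi-axis to $0$ and the other to $\infty$. To quantify this, I will apply Lemma \ref{elliptic estimate} (with Lemma \ref{chord integral ineq} interpolating when $q$ is an integer) to the rescaled ellipsoid $E_i/\max(a_i, b_i)$, with the axes ordered increasingly. The lemma's bound produces a polynomial size inequality of the form $a_i^{\sigma_1} b_i^{\sigma_2} \geq c$ for explicit $\sigma_1, \sigma_2$: in the prolate regime $a_i \leq b_i$, this gives $a_i^{n+q-2} b_i \geq c$; in the oblate regime $b_i \leq a_i$, a symmetric relation.

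Finally, since $\sup_{I_q = 1} \Phi_p \geq I_q(B_1)^{-p/(n+q-1)} \|f\|_{L^1(\mathbb{S}^{n-1})} > 0$ (test with a rescaled unit ball), a maximizing sequence satisfies $\Phi_p(h_i) \geq \delta > 0$ eventually. On the other hand, using $h_i \geq h_{E_i}/n$, $p < 0$, and $f \leq C |v'|^\alpha |v_n|^\beta$, then passing to spherical coordinates $|v'| = \sin \theta$, $v_n = \cos\theta$,
\begin{equation*}
\Phi_p(h_i) \leq C \int_0^{\pi/2} \sin^{\alpha + n - 2}\theta \, \cos^\beta \theta \, (a_i^2 \sin^2 \theta + b_i^2 \cos^2 \theta)^{p/2} \, d\theta.
\end{equation*}
Splitting this integral at the balance angle $\theta^\star = \arctan(b_i/a_i)$ and evaluating each piece by elementary antiderivatives, then substituting the size inequality from the previous paragraph, yields a bound that vanishes in the prolate degeneration ($a_i \to 0$, $b_i \to \infty$) precisely because the critical contribution reduces to $a_i^{p + (n+q-1)(\beta+1)}$ with positive exponent thanks to the hypothesis $\beta > -1 - p/(n+q-1)$; the oblate degeneration vanishes by the analogous computation using $\alpha > 1 - n + \frac{2-n-q}{n+q-1} p$. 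This exponent bookkeeping is the main technical obstacle: the hypotheses in Theorem \ref{B} are sharp, so every critical exponent generated by the splitting must align exactly with one of them, and one must separately handle the integer-$q$ case through the interpolation of Lemma \ref{chord integral ineq}.
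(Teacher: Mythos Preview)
Your overall strategy coincides with the paper's: reduce to the John ellipsoid, use the constraint $I_q(E_i)\asymp 1$ together with Lemma~\ref{elliptic estimate} (and Lemma~\ref{chord integral ineq} for integer $q$) to obtain a polynomial relation between the semi-axes, bound $\Phi_p(h_i)$ via $h_i\ge h_{E_i}/n$ and $f\le C|x'|^\alpha|x_n|^\beta$, and show the resulting integral tends to $0$ under any degeneration. The paper parametrises the ellipsoid by a scale $r_i$ and an aspect ratio (its $a_i$) rather than the two semi-axes, but this is cosmetic.

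There is, however, a genuine gap in your case analysis. From $\min(a_i,b_i)\le C_1$ and $\max(a_i,b_i)\ge c_1$ you conclude that ``the only dangerous scenario is simultaneous degeneration of one semi-axis to $0$ and the other to $\infty$''. That inference is false: those two inequalities only exclude $(a_i,b_i)\to(\infty,\infty)$ and $(0,0)$. The one-sided size inequality you extract from Lemma~\ref{elliptic estimate} (e.g.\ $a_i^{n+q-2}b_i\ge c$ in the prolate regime) additionally kills $(0,\text{finite})$ and $(\text{finite},0)$, but it does \emph{not} exclude $(\text{finite},\infty)$ or $(\infty,\text{finite})$, since those make the product large, not small. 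Your write-up then treats only the simultaneous degenerations $(0,\infty)$ and $(\infty,0)$.

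There are two ways to close this. The paper's route is to supplement Lemma~\ref{elliptic estimate} with a \emph{lower} bound for $I_q(E_i)$, obtained by Jensen's inequality on the $X$-ray representation $I_q(E)=\frac{1}{n\omega_n}\int_{S^{n-1}}\int_{E|u^\perp}X_E(x,u)^q\,dx\,du$; this yields an \emph{upper} bound on a monomial in the semi-axes and pins down both axes up to the aspect ratio. Alternatively, your own $\Phi_p$ computation already handles $(\text{finite},\infty)$ and $(\infty,\text{finite})$ directly, using only the weak hypotheses $\alpha>1-n$ and $\beta>-1$ (no size relation needed): for instance, if $a_i\asymp 1$ and $b_i\to\infty$, each term in your split integral is dominated by a power of $b_i^{-1}$. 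Either way, you must explicitly dispose of these ``one-sided'' degenerations; as written, the proposal does not.
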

	\begin{proof}
		Since $h_i\in C_{re}^+(\mathbb{S}^{n-1})$ is a maximizing sequence; that is $I_q(K_i)=1$ and 
		$$
		\lim_{i\rightarrow\infty}\Phi_p(h_i)=\sup_{h\in C_{re}^+(\mathbb{S}^{n-1})}\left\{\Phi_p(h): I_q(K_h)=1\right\}
		$$
		where $K_i$ is the convex body uniquely determined by $h_i.$ 
		By John' lemma, we have $\frac{1}{n}E_i\subset K_i\subset E_i.$ y. Since $K_i$ is rotationally symmetric and even, $E_i$ is also rotationally symmetric and even. Therefore, the centre of $E_i$ is at the origin and there exists a unique rotationally symmetric matrix $A_i$ of the form
		$$
		A_i=\left(\begin{array}{cccc}
		r_i a_i^{\frac{1}{n}} & & & \\
		& \ddots & & \\
		& & r_i a_i^{\frac{1}{n}} & \\
		& & & r_i a_i^{\frac{1-n}{n}}
		\end{array}\right), \text { where } r_i>0, a_i>0 \text { are constants, }
		$$
		such that
		$$
		\begin{aligned}
		& h_{E_i}(x)=\left|A_i x\right| \quad \text { on } \mathbb{S}^{n-1}, \\
		& \rho_{E_i}(u)=\left|A_i^{-1} u\right|^{-1} \text { on } \mathbb{S}^{n-1}\\
		& \Vol(E_i)=\kappa_n r_i^n.
		\end{aligned}
		$$
		
		Suppose to the contrary that
		$$
		\max_{x \in \mathbb{S}^{n-1}}h_i(x)\rightarrow +\infty \text{ or } \min_{x \in \mathbb{S}^{n-1}}h_i(x)\rightarrow 0,
		$$
		as $i\rightarrow +\infty.$ Since we have
		$$
		\frac{1}{n}\left|A_i x\right| \leq h_i(x) \leq\left|A_i X\right|, \quad \forall x \in \mathbb{S}^{n-1}
		$$
		which implies that
		$$
		r_i a_i^{\frac{1}{n}}\rightarrow +\infty \text{ or } r_i a_i^{\frac{1}{n}}\rightarrow 0 \text{ or } r_i a_i^{\frac{1-n}{n}}\rightarrow +\infty \text{ or } r_i a_i^{\frac{1-n}{n}}\rightarrow 0
		$$
		as $i\rightarrow +\infty,$ which must cause one of the four cases 
		$$
		r_i \rightarrow +\infty \quad r_i \rightarrow 0 \quad a_i \rightarrow +\infty \quad a_i\rightarrow 0.
		$$
		Since
		$$
		h_i(x) \geq \frac{1}{n} h_{E_i}(X)=\frac{1}{n}\left|A_i x\right| \text { on } \mathbb{S}^{n-1},
		$$
		and $p<0$, from the assumption of $f$ we have
		$$
		\begin{aligned}
		\Phi_p(h_i)&=\int_{\mathbb{S}^{n-1}}f(x)h_i(x)^p d x\\
		&\leqslant n^{-p}\int_{\mathbb{S}^{n-1}}f(x)\left|A_i x\right|^p d x\\
		&\leqslant C n^{-p}\int_{\mathbb{S}^{n-1}}\left|x^{\prime}\right|^\alpha\left|x_n\right|^\beta\left|A_i x\right|^p d x\\
		&:=C n^{-p}F(A_i)
		\end{aligned}
		$$
		we compute in the spherical coordinates as follows:
		$$
		\begin{aligned}
		F(A_i)= & r_i^p a_i^{\frac{p}{n}} \int_{\mathbb{S}^{n-1}}\left|x^{\prime}\right|^\alpha\left|x_n\right|^\beta\left(\left|x^{\prime}\right|^2+a_i^{-2} x_n^2\right)^{\frac{p}{2}} \mathrm{~d} x \\
		= & 2 r_i^p a_i^{\frac{p}{n}} \omega_{n-2} \int_0^{\frac{\pi}{2}} \sin ^\alpha \theta \cos ^\beta \theta\left(\sin ^2 \theta+a_i^{-2} \cos ^2 \theta\right)^{\frac{p}{2}} \sin ^{n-2} \theta \mathrm{d} \theta \\
		\leq & Cr_i^p a_i^{\frac{p}{n}} \int_0^{\frac{\pi}{4}} \theta^{\alpha+n-2}\left(\theta^2+a_i^{-2}\right)^{\frac{p}{2}} \mathrm{~d} \theta \\
		& +C r_i^p a_i^{\frac{p}{n}} \int_{\frac{\pi}{4}}^{\frac{\pi}{2}}\left(\frac{\pi}{2}-\theta\right)^\beta\left(1+a_i^{-2}\left(\frac{\pi}{2}-\theta\right)^2\right)^{\frac{p}{2}} \mathrm{~d} \theta .
		\end{aligned}
		$$
		
		Case 1:	When $a_i>3$. We can further estimate these following integrals:
		$$
		\begin{aligned}
		\int_0^{\frac{\pi}{4}}\left(\theta^2+a_i^{-2}\right)^{\frac{p}{2}} \theta^{\alpha+n-2} \mathrm{~d} \theta & \leq \int_0^{\frac{1}{a_i}} a_i^{-p} \theta^{\alpha+n-2} \mathrm{~d} \theta+\int_{\frac{1}{a_i}}^{\frac{\pi}{4}} \theta^p \theta^{\alpha+n-2} \mathrm{~d} \theta \\
		& =\frac{a_i^{-p-\alpha-n+1}}{\alpha+n-1}+\int_{\frac{1}{a_i}}^{\frac{\pi}{4}} \theta^{p+\alpha+n-2} \mathrm{~d} \theta \\
		& \leq C \begin{cases}1, & \text { if } p+\alpha+n-1>0, \\
		\log a_i, & \text { if } p+\alpha+n-1=0, \\
		a_i^{-p-\alpha-n+1}, & \text { if } p+\alpha+n-1<0,\end{cases}
		\end{aligned}
		$$
		and applying that $\beta>-1,$ we have 
		$$
		\begin{aligned}
		\int_{\frac{\pi}{4}}^{\frac{\pi}{2}}\left(\frac{\pi}{2}-\theta\right)^\beta\left(1+a_i^{-2}\left(\frac{\pi}{2}-\theta\right)^2\right)^{\frac{p}{2}} \mathrm{~d} \theta & =\int_0^{\frac{\pi}{4}} t^\beta\left(1+a_i^{-2} t^2\right)^{\frac{p}{2}} \mathrm{~d} t \\
		& \leq C \int_0^{\frac{\pi}{4}} t^\beta \mathrm{d} t \\
		& \leq C .
		\end{aligned}
		$$
		Then we obtain
		$$
		F(A_i) \leq C r_i^p a_i^{\frac{p}{n}} \begin{cases}1, & \text { if } p+\alpha+n-1>0, \\ \log a_i, & \text { if } p+\alpha+n-1=0, \\ a_i^{-p-\alpha-n+1}, & \text { if } p+\alpha+n-1<0 .\end{cases}
		$$
		
		Cace 2: When $1/3\leqslant a_i \leqslant 3.$ Since $\left|A_i x\right|=r_i a_i^{\frac{1}{n}}(\left|x^{\prime}\right|^2+a_i^{-2} x_n^2)^{\frac{1}{2}} ,$ by the assumption of $a_i$ and $\alpha>1-n, \beta>-1,$ we have
		$$
		C r_i\leqslant \left|A_i x\right|\leqslant \tilde{C}r_i,\quad \forall x\in \mathbb{S}^{n-1}.
		$$
		Then
		$$
		C r_i^p\leqslant F(A_i)\leqslant \tilde{C}r_i^p,\quad \forall x\in \mathbb{S}^{n-1}.
		$$
		
		Cace 3: When $a_i<1 / 3$, since we have
		$$
		F(A_i)=2 r_i^p a_i^{\frac{p}{n}-p} \omega_{n-2} \int_0^{\frac{\pi}{2}} \sin ^\alpha \theta \cos ^\beta \theta\left(a_i^2 \sin ^2 \theta+\cos ^2 \theta\right)^{\frac{p}{2}} \sin ^{n-2} \theta \mathrm{d} \theta .
		$$
		Then, applying $\alpha+n-1>0,$  we have
		$$
		\begin{aligned}
		F(A_i) & \leq C r_i^p a_i^{\frac{p}{n}-p}\left[\int_0^{\frac{\pi}{4}}  \theta^{\alpha+n-2}\left(a_i^2 \theta ^2 +1\right)^{\frac{p}{2}} \mathrm{~d} \theta+\int_{\frac{\pi}{4}}^{\frac{\pi}{2}}\left(\frac{\pi}{2}-\theta\right)^\beta\left(a_i^2+\left(\frac{\pi}{2}-\theta\right)^2\right)^{\frac{p}{2}} \mathrm{~d} \theta\right] \\
		& \leq C r_i^p a_i^{\frac{p}{n}-p}\left[1+\int_0^{\frac{\pi}{4}} t^\beta\left(a_i^2+t^2\right)^{\frac{p}{2}} \mathrm{~d} t\right] .
		\end{aligned}
		$$
		And
		$$
		\begin{aligned}
		\int_0^{\frac{\pi}{4}} t^\beta\left(a_i^2+t^2\right)^{\frac{p}{2}} \mathrm{~d} t & \leq \int_0^{a_i} t^\beta a_i^p \mathrm{~d} t+\int_{a_i}^{\frac{\pi}{4}} t^\beta t^p \mathrm{~d} t \\
		& =\frac{a_i^{p+\beta+1}}{\beta+1}+\int_{a_i}^{\frac{\pi}{4}} t^{\beta+p} \mathrm{~d} t \\
		& \leq C \begin{cases}1, & \text { if } \beta+p+1>0, \\
		|\log a_i|, & \text { if } \beta+p+1=0, \\
		a_i^{\beta+p+1,} & \text { if } \beta+p+1<0 .\end{cases}
		\end{aligned}
		$$
		Then we obtain
		$$
		F(A_i) \leq C r_i^p a_i^{\frac{p}{n}-p} \begin{cases}1, & \text { if } \beta+p+1>0, \\ |\log a_i|, & \text { if } \beta+p+1=0, \\ a_i^{\beta+p+1}, & \text { if } \beta+p+1<0 .\end{cases}
		$$
		\begin{claim}
			If one of the four caces $r_i \rightarrow +\infty \quad r_i \rightarrow 0 \quad a_i \rightarrow +\infty \quad a_i\rightarrow 0$ occurs, then $F(A_i)\rightarrow 0.$
		\end{claim}
		\begin{proof}
			Since $I_q(K_i)=1,$ and $\frac{1}{n}E_i\subset K_i\subset E_i,$ We have
			$$
			(\frac{1}{n})^{n+q-1}I_q(E_i)=I_q(\frac{1}{n}E_i)\leqslant I_q(K_i)\leqslant I_q(E_i).
			$$
			If we set $\Lambda=n^{q+n-1} $, then
			$$
			\Lambda^{-1} \leq I_q(E_i) \leq \Lambda
			$$
			By \ref{elliptic estimate}, when $q\in (1, n + 1)$ is not an integer, we have
			$$
			\begin{aligned}
			I_q(E_i)&=
			\left\{\begin{array}{lll}
			\left(r_i a_i^{\frac{1}{n}}\right)^{n+q-1}I_q\left(\left(r_i a_i^{\frac{1}{n}}\right)^{-1}E_i\right) & \text { when } a_i>1 \\
			\left(r_i a_i^{\frac{1-n}{n}}\right)^{n+q-1}I_q\left(\left(r_i a_i^{\frac{1-n}{n}}\right)^{-1}E_i\right) & \text { when } a_i\leqslant 1\\
			\end{array}\right. \\
			&\leqslant
			\left\{\begin{array}{lll}
			c_{q, m, n}\left(r_i a_i^{\frac{1}{n}}\right)^{n+q-1}a_i^{-2} & \text { when } a_i>1 \\
			c_{q, m, n}\left(r_i a_i^{\frac{1-n}{n}}\right)^{n+q-1}a_i^{n+q-2} & \text { when } a_i\leqslant 1\\
			\end{array}\right. \\
			&\leqslant c_{q, m, n}\left(r_i a_i^{\frac{1}{n}}\right)^{n+q-1}a_i^{-1}
			\end{aligned}
			$$
			where the last inequality is due to $a_i>1,$ which leads to $a_i^{-2}<a_i^{-1}.$\\
			When $q\in (1, n + 1)$ is an integer, we can choose a proper $q'\in (q,q+1),$ by \ref{chord integral ineq}, we have
			$$
			\begin{aligned}
			I_q(E_i)& \leqslant c(q',q)V(E_i)^{1-\frac{q-1}{q'-1}}I_{q'}(E_i)^{\frac{q-1}{q'-1}}\\
			& \leqslant \omega_n c(q',q)c_{q', q, n}\left(r_i a_i^{\frac{1}{n}}\right)^{n+q-1}a_i^{-1}.
			\end{aligned}
			$$
			On the other hand, 
			$$
			I_q(E_i)=\frac{1}{n \omega_n} \int_{S^{n-1}} \int_{E_i|u^{\bot}} X_{E_i}(x, u)^q \mathrm{~d} x \mathrm{~d} u, \quad q \geq 0 .
			$$
			where $E_i|u^{\bot}$ denotes the projection of $E_i$ onto $u^{\bot}.$ For $q>1,$ we have 
			$$
			I_q(E_i)\geqslant \frac{1}{n \omega_n} \int_{S^{n-1}} V(E_i)^q V_{n-1}(E_i|u^{\bot})^{1-q}du
			$$
			Indeed, H$\ddot{o}$lder inequality gives
			\begin{small}
				$$
				\frac{1}{V_{n-1}(E_i|u^{\bot})}\int_{E_i|u^{\bot}} X_{E_i}(x, u)^q \mathrm{~d} x\geqslant \left(\frac{1}{V_{n-1}(E_i|u^{\bot})}\int_{E_i|u^{\bot}} X_{E_i}(x, u) \mathrm{~d} x\right)^q=\left(\frac{V(E_i)}{V_{n-1}(E_i|u^{\bot})}\right)^q.
				$$
			\end{small}
			Recall that $V(E_i)=\kappa_n r_i^n.$ \\
			If $a_i>1$, $r_i a_i^{\frac{1}{n}}>r_i a_i^{\frac{1-n}{n}},$ it follows that $V_{n-1}(E_i|u^{\bot})\leqslant \kappa_{n-1} \left(r_i a_i^{\frac{1}{n}}\right)^{n-1}.$\\
			Hence,
			$$
			\begin{aligned}
			I_q(E_i)&\geqslant \frac{\kappa_{n-1}^{1-q}}{n\kappa_n^{1-q}}\left(r_i a_i^{\frac{1}{n}}\right)^{n-1}\left(r_i a_i^{\frac{1-n}{n}}\right)^{q}\\
			&=\frac{\kappa_{n-1}^{1-q}}{n\kappa_n^{1-q}}\left(r_i a_i^{\frac{1}{n}}\right)^{q+n-1} a_i^{-q}
			\end{aligned}
			$$
			If $a\leqslant 1$, $r_i a_i^{\frac{1}{n}}\leqslant r_i a_i^{\frac{1-n}{n}},$ it follows that $V_{n-1}(E_i|u^{\bot})\leqslant \kappa_{n-1} \left(r_i a_i^{\frac{1}{n}}\right)^{n-2}r_i a_i^{\frac{1-n}{n}}.$\\
			Hence,
			$$
			I_q(E_i)\geqslant \frac{\kappa_{n-1}^{1-q}}{n\kappa_n^{1-q}}\left(r_i a_i^{\frac{1}{n}}\right)^{q+n-1} a_i^{-1}.
			$$
			In conlusion, 
			$$
			\begin{gathered}
			\left(r_i a_i^{\frac{1}{n}}\right)^{q+n-1} a_i^{-q}\leqslant C,\left(r_i a_i^{\frac{1}{n}}\right)^{q+n-1} a_i^{-1}\geqslant c \quad a>1\\
			c \leqslant \left(r_i a_i^{\frac{1}{n}}\right)^{q+n-1} a_i^{-1}\leqslant C \quad a\leqslant 1
			\end{gathered}
			$$
			where the constants $c,C$ only depend on $n,q.$
			Therefore, we shall prove the claim case by case as follows:\\
			In case 1: $a>3,$ then $\left(r_i a_i^{\frac{1}{n}}\right)^p\leqslant c a^{\frac{p}{n+q-1}}.$ Then we obtain
			$$
			F(A) \leq C \begin{cases}a^{\frac{p}{n+q-1}}, & \text { if } p+\alpha+n-1>0, \\ a^{\frac{p}{n+q-1}}\log a, & \text { if } p+\alpha+n-1=0, \\ a^{\frac{p}{n+q-1}-p-\alpha-n+1}, & \text { if } p+\alpha+n-1<0 .\end{cases}
			$$
			By the assumptions in Theorem \ref{B}, we observe that the power of $a$ is negative. If one of $r_i \rightarrow +\infty \quad r_i \rightarrow 0 \quad a_i \rightarrow +\infty$ occurs, then $F(A_i)\rightarrow 0.$\\
			In case 2: If $1/3\leqslant a\leqslant 1,$ then $c a^{\frac{1}{n+q-1}-\frac{1}{n}} \leqslant r_i\leqslant C a^{\frac{1}{n+q-1}-\frac{1}{n}} ;$ If $1\leqslant a\leqslant 3,$ then $c a^{\frac{1}{n+q-1}-\frac{1}{n}} \leqslant r_i\leqslant C a^{\frac{q}{n+q-1}-\frac{1}{n}} .$ That is $c \leqslant r_i\leqslant C.$ Then we obtain
			$$
			F(A)\leqslant \tilde{C}a^{\frac{p}{n+q-1}-\frac{p}{n}}\leqslant C.
			$$
			None of the four caces $r_i \rightarrow +\infty \quad r_i \rightarrow 0 \quad a_i\rightarrow +\infty \quad a_i\rightarrow 0$ can occur.\\
			In case 3: $a<1/3,$ then $\left(r_i a_i^{\frac{1}{n}}\right)^pa_i^{-p}\leqslant c a^{\frac{p}{n+q-1}-p}.$ Then we obtain
			$$
			F(A) \leq C \begin{cases}a^{\frac{p}{n+q-1}-p}, & \text { if } \beta+p+1>0, \\ a^{\frac{p}{n+q-1}-p}|\log a|, & \text { if } \beta+p+1=0, \\ a^{a^{\frac{p}{n+q-1}-p}+\beta+p+1}, & \text { if } \beta+p+1<0 .\end{cases}
			$$
			By the assumptions in Theorem \ref{B}, we observe that the power of $a$ is positive. If one of $r_i \rightarrow +\infty \quad r_i \rightarrow 0 \quad a_i\rightarrow 0$ occurs, then $F(A_i)\rightarrow 0.$
		\end{proof}	
		This claim means that if one of the four cases $r_i \rightarrow+\infty, r_i \rightarrow 0, a_i \rightarrow+\infty, a_i \rightarrow 0$ occurs, we have that
		$$
		\Phi_p(h_i) \rightarrow 0, \quad \text { as } \quad i \rightarrow+\infty
		$$
		However, taking $h\equiv r_0$ for some $r_0$ such that $I_q(B_{r_0})=1.$ Indeed, since $I_q(B_{r_0})=r_0^{n+q-1}I_q(B_1),$ we can choose $r_0=\frac{\omega_q}{2^q\omega_n\omega_{n+q-1}}.$ Hence, we have
		\begin{eqnarray}
		\sup_{h\in C_{re}^+(\mathbb{S}^{n-1})}\left\{\Phi_p(h): I_q(K_h)=1\right\}
		&\geqslant& \int_{\mathbb{S}^{n-1}}f r_0^p\\
		&\geqslant& C_1\|f\|_{L^1\left(\mathbb{S}^{n-1}\right)}>0,
		\end{eqnarray}
		which is a contradiction. Therefore, $\left\{h_i\right\}$ has uniformly positive upper and lower bounds.
	\end{proof}
	\begin{lemma}\label{exi}
		The maximization problem has a solution $h.$
	\end{lemma}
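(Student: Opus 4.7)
The plan is to extract a convergent subsequence from a maximizing sequence and pass to the limit in both the constraint functional $I_q$ and the objective $\Phi_p$, using the $C^0$ bound \eqref{key estimate} proved in the preceding lemma as the crucial compactness ingredient.

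First, I would take a maximizing sequence $\{h_i\}\subset C_{re}^+(\mathbb{S}^{n-1})$ with $I_q(K_i)=1$ and $\Phi_p(h_i)\to S:=\sup\Phi_p$. By the preceding lemma, $C^{-1}\le h_i\le C$ uniformly on $\mathbb{S}^{n-1}$, so the convex bodies $K_i=[h_i]$ satisfy $B_{1/C}\subset K_i\subset B_C$. By the Blaschke selection theorem, a subsequence (not relabeled) converges in Hausdorff distance to some convex body $K_\infty$ with $B_{1/C}\subset K_\infty\subset B_C$; in particular $K_\infty\in\mathcal{K}^n_o$. Since $K_i\to K_\infty$ in Hausdorff distance is equivalent to $h_i\to h_\infty:=h_{K_\infty}$ uniformly on $\mathbb{S}^{n-1}$, and since uniform limits preserve rotational symmetry and evenness, $h_\infty\in C_{re}^+(\mathbb{S}^{n-1})$ and still satisfies $C^{-1}\le h_\infty\le C$.

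Next I would verify that the constraint is preserved. The weak continuity of the chord measure $F_q(K_i,\cdot)\to F_q(K_\infty,\cdot)$ recalled in Section 2 (from \cite{XYZZ2022}) together with the identity
$$
I_q(K)=\frac{1}{n+q-1}\int_{\mathbb{S}^{n-1}} h_K(v)\,dF_q(K,v)
$$
and the uniform convergence $h_{K_i}\to h_{K_\infty}$ yield $I_q(K_i)\to I_q(K_\infty)$, so $I_q(K_\infty)=1$. Thus $h_\infty$ is admissible in the maximization problem.

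Finally, I would pass to the limit in $\Phi_p$. Because $p<0$ and $h_i,h_\infty$ are uniformly bounded above and below, $h_i^p\to h_\infty^p$ uniformly on $\mathbb{S}^{n-1}$ with a uniform bound $|h_i^p|\le C^{-p}$. Since $\alpha>1-n$ and $\beta>-1$, the bound \eqref{f} gives $f\in L^1(\mathbb{S}^{n-1})$, so the dominated convergence theorem yields
$$
\Phi_p(h_\infty)=\int_{\mathbb{S}^{n-1}} f(x)\, h_\infty(x)^p\,dx =\lim_{i\to\infty}\int_{\mathbb{S}^{n-1}} f(x)\, h_i(x)^p\,dx=S.
$$
Therefore $h_\infty$ attains the supremum. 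The step that required genuine work is already behind us—namely the $C^0$ bound from the previous lemma, which is what prevents $h_\infty$ from degenerating and makes both the Hausdorff compactness and the dominated convergence argument available; the remaining obstacle here is only to cite the correct continuity properties of $I_q$ under Hausdorff convergence, which follow directly from the weak continuity of $F_q$ recorded in Section~2.
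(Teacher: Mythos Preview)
Your proof is correct and follows essentially the same approach as the paper's: take a maximizing sequence, invoke the $C^0$ estimate \eqref{key estimate} to apply Blaschke selection, and pass to the limit in both $I_q$ and $\Phi_p$. In fact you supply more detail than the paper does---you explicitly justify $I_q(K_\infty)=1$ via the weak continuity of $F_q$ and the integral representation of $I_q$, and you justify $\Phi_p(h_i)\to\Phi_p(h_\infty)$ via dominated convergence using $f\in L^1$---whereas the paper simply asserts both limits.
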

	\begin{proof}
		Let $h_i\in C_{re}^+(\mathbb{S}^{n-1})$ be a maximizing sequence; that is $I_q(K_i)=1$ and 
		$$
		\lim_{i\rightarrow\infty}\Phi_p(h_i)=\sup_{h\in C_{re}^+(\mathbb{S}^{n-1})}\left\{\Phi_p(h): I_q(K_h)=1\right\}
		$$
		where $K_i$ is the convex body uniquely determined by $h_i.$ By \ref{key estimate} we have $ c\leqslant h_i\leqslant C$ as $i$ big enough. By the Blaschke selection theorem, there is a subsequence of $\left\{h_i\right\}$ that uniformly converges to some support function $h$ on $\mathbb{S}^{n-1}$. Note that $h$ is also rotationally symmetric and even on $\mathbb{S}^{n-1}$, satisfying \eqref{key estimate}, $I_q(K_h)=1$, and
		$$
		\Phi_p(h)=\lim _{i \rightarrow+\infty}\Phi_p(h_i)=\sup_{h\in C_{re}^+(\mathbb{S}^{n-1})}\left\{\Phi_p(h): I_q(K_h)=1\right\}
		$$
		Hence, $h$ is a solution to the maximization problem. 
	\end{proof}
	\begin{theorem}
		Let $h$ be the maximizer obtained from lemma \ref{exi}. Then $h$ is a generalized solution of 
		$$
		\mbox{det}(\nabla^2h+hI) =\frac{fh^{p-1}}{C\widetilde{V}_{q-1}([h],\bar{\nabla} h)},
		$$
		where $C=\frac{2q}{(n+q-1)\omega_n}\int fh^p.$
	\end{theorem}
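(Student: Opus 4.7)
The plan is to derive the equation as the Euler--Lagrange identity for the constrained maximization problem \eqref{max} via a Lagrange-multiplier-style perturbation. Fix an arbitrary rotationally symmetric even $g \in C(\mathbb{S}^{n-1})$. Since $h \geq c > 0$ on $\mathbb{S}^{n-1}$ by \eqref{key estimate}, the perturbation $h_t := h + tg$ is strictly positive and lies in $C_{re}^+(\mathbb{S}^{n-1})$ for $|t|$ sufficiently small; its Wulff shape $K_t := [h_t]$ belongs to $\mathcal{K}_o^n$. To restore the constraint I rescale: by the homogeneity $I_q(sK) = s^{n+q-1} I_q(K)$, setting $\lambda(t) := I_q(K_t)^{-1/(n+q-1)}$ and $\tilde h_t := \lambda(t) h_t$ produces $I_q(K_{\tilde h_t}) = 1$, so that maximality of $h$ forces $t \mapsto \Phi_p(\tilde h_t) = \lambda(t)^p \int f h_t^p\, dx$ to have a critical point at $t = 0$.

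Next I invoke the variational formula \eqref{diff} (applied with $o(t,\cdot) \equiv 0$), which yields
\[
\tfrac{d}{dt}\big|_{t=0} I_q(K_t) = \int_{\mathbb{S}^{n-1}} g\, dF_q(K_h, v), \qquad \lambda'(0) = -\tfrac{1}{n+q-1} \int_{\mathbb{S}^{n-1}} g\, dF_q(K_h, v).
\]
Differentiating $\Phi_p(\tilde h_t)$ at $t = 0$ and setting the derivative to zero gives
\[
p\lambda'(0) \int_{\mathbb{S}^{n-1}} f h^p\, dx + p \int_{\mathbb{S}^{n-1}} f h^{p-1} g\, dx = 0,
\]
and substituting the value of $\lambda'(0)$ produces the weak Euler--Lagrange relation
\[
\int_{\mathbb{S}^{n-1}} g\, dF_q(K_h, v) = \frac{n+q-1}{\int f h^p\, dx} \int_{\mathbb{S}^{n-1}} f(x) h(x)^{p-1} g(x)\, dx
\]
for every rotationally symmetric even $g \in C(\mathbb{S}^{n-1})$.

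I then upgrade this to an equality of measures on all of $\mathbb{S}^{n-1}$. Both sides define finite Borel measures invariant under the compact group $O(n-1) \times \{\pm 1\}$ acting on the first $n-1$ and the last coordinate, because $K_h$, $f$, and $h$ share these symmetries. Averaging any $g \in C(\mathbb{S}^{n-1})$ over this group yields a symmetric test function with the same integral against every symmetry-invariant Borel measure, so the identity in fact holds for all $g \in C(\mathbb{S}^{n-1})$. Recalling the representation
\[
dF_q(K_h, v) = \tfrac{2q}{\omega_n}\widetilde{V}_{q-1}\bigl(K_h, \bar{\nabla} h(v)\bigr) \det(\nabla^2 h + h I)(v)\, dv
\]
from Section 2 identifies this as the stated equation in the generalized (measure) sense, with $C = \frac{2q \int f h^p\, dx}{(n+q-1)\omega_n}$.

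The main obstacle is handling the nonlocal factor $\widetilde{V}_{q-1}(K_h, \bar{\nabla} h)$ in the passage from the weak identity to the pointwise form: on a general maximizer $h$ one cannot expect smoothness, so $\det(\nabla^2 h + h I)\, dv$ is to be interpreted through the surface area measure $S(K_h,\cdot)$ and $\widetilde{V}_{q-1}(K_h, \bar{\nabla} h(v))$ through its continuity in its boundary-point argument. Crucially, the variational formula \eqref{diff} delivers the full measure $dF_q(K_h,\cdot)$ directly, so no direct differentiation of $\widetilde{V}_{q-1}$ in its second argument is required, which is exactly what sidesteps the nonlocal difficulty flagged in the remark following Theorem \ref{A}.
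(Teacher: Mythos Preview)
Your argument is correct and follows the same Lagrange-multiplier approach as the paper, differentiating $\lambda(t)^p\int_{\mathbb{S}^{n-1}} f(h+tg)^p$ at $t=0$ via the variational formula \eqref{diff}. Your version is in fact slightly cleaner: the paper routes through an auxiliary $\Psi_p(t)=\Phi_p(\lambda(t)h_{K_t})$ (with $h_{K_t}$ the support function of $[h+tg]$), uses the inequality $h_{K_t}\le h+tg$ together with $p<0$ and one-sided difference quotients (then replaces $g$ by $-g$), whereas you observe directly that $\lambda(t)(h+tg)\in C_{re}^+(\mathbb{S}^{n-1})$ is itself admissible in \eqref{max}; and your group-averaging over $O(n-1)\times\{\pm1\}$ makes explicit the passage from rotationally symmetric even test functions to all $g\in C(\mathbb{S}^{n-1})$, a step the paper states without justification.
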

	\begin{proof}
		Let $h$ be the maximizer obtained from lemma \ref{exi} , we have that $1/C \leqslant h\leqslant C$ by \eqref{key estimate}. For any given rotationally symmetric and even $g\in C(\mathbb{S}^{n-1}),$ let
		$$
		K_t=\left\{x\in\mathbb{R}^n:x\cdot u\leqslant(h+tg)(u)\right\} 
		$$
		for sufficiently small $\delta>0$ such that $|t|<\delta$, we have $ h+tg>0,$ and $h_t\in C_{re}^+(\mathbb{S}^{n-1}),$ where $h_t$ is the support function of $K_t.$ Note that $h_0=h,K_0=K.$ 
		Let $\lambda(t)=I_q(K_t)^{-\frac{1}{n+q-1}},$ then $I_q(\lambda(t)K_t)=1,$ and $\lambda'(0)=-\frac{1}{n+q-1}\int_{\mathbb{S}^{n-1}}g(v)dF_q(K,v).$ Denote 
		$$
		\Psi_p(t)=\Phi_{p}(\lambda(t)h_t).
		$$
		As $h$ is a maximizer, the function $t\mapsto \Psi_p(t)$ attains its maximum at $t=0$. However, $ h_t$ may not be differentiable at $t=0.$ Let
		$$
		\psi_p(t)=\Phi_{p}(\lambda(t)(h+tg)).
		$$
		Since $K_t$ is the Wulff shape of $h+tg$, we have $h_t\leqslant h+tg.$ Since $f$ nonnegative, $ \lambda(t)>0,p<0$ and 
		$$
		\lambda(t)h_t \leqslant \lambda(t)(h+tg),
		$$
		thus we have
		$$
		\Psi_p(0)\geqslant \Psi_p(t)\geqslant \psi_p(t).
		$$
		Since $\Psi_p(0)=\psi_p(0),$ $\psi_p(t)$ also attains its maximum at $t=0.$ Hence we have
		$$
		\lim_{t_k\rightarrow 0^+}\frac{\psi_p(t_k)-\psi_p(0)}{t_k} \leqslant 0,
		$$
		for any convergent subsequence $\{t_k\}.$
		Therefore, we have
		$$
		\int_{S^{n-1}} ph(x)^{p-1}f(x)\left(\lambda^{\prime}(0) h+g\right) \leqslant 0 ,
		$$
		since $p<0$ and we can also replace $g$ by $-g,$ it follows that
		$$
		\int f h(x)^{p-1} g = \frac{1}{n+q-1}\int fh(x)^{p}\int g d F_q(K,\cdot) .
		$$
		Since $g\in C(\mathbb{S}^{n-1})$ is arbitrary, we conclude that 
		$$
		\mbox{det}(\nabla^2h+hI) =\frac{fh^{p-1}}{C\widetilde{V}_{q-1}([h],\bar{\nabla} h)},
		$$
		where $C=\frac{2q}{(n+q-1)\omega_n}\int fh^p.$
	\end{proof}
	Now, we can prove Theorem \ref{B}. Let $h$ be the maximizer obtained from lemma \ref{exi}, after a suitable scaling $\alpha h$ solves \eqref{MAeq} with $\alpha =C^{\frac{1}{n+q-p-1}}$. 
	Denote $h_0=\alpha h,$ $K_0$ is its corresponding convex body. We already know that $h_0$ is a generalized solution to \eqref{MAeq}. Now we want to estimate the chord integral of $K_0.$ 
	Since $I_q(K_0)=C^{\frac{n+q-1}{n+q-p-1}}I_q(K)=C^{\frac{n+q-1}{n+q-p-1}},$ we only need to estimate the bound of $C$ from below. Applying \eqref{key estimate}, $h$ has uniform bound from above. Hence, we have
	$$
	\begin{aligned}
	C &=\frac{2q}{(n+q-1)\omega_n}\int fh^p\\
	&\geqslant c \frac{2q}{(n+q-1)\omega_n}\|f\|_{L^1(\mathbb{S}^{n-1})}\\
	&> 0.
	\end{aligned}
	$$
	The proof of Theorem \ref{B} is completed.
	
	\section{Proof to Theorem \ref{A}}
	In this section, we aim to prove Theorem \ref{A}. It's a result which follows easily from Section 3 and Section 4.
	\begin{proof}
		When $p<0$ and $2<q<n+1,$ for any given $\epsilon \in (0,1/2),$ let 
		\begin{equation}
		f_\epsilon(x):=h_\epsilon\left(x_\epsilon\right)^{1-p}\left|X^{\prime}\right|^\alpha\left|x_n\right|^\beta\left|N_\epsilon X\right|^{-\gamma-\alpha-n-p}\frac{1}{n}\int_{S^{n-1}}\left|N_\epsilon y\right|^{q-1-n}\rho_{K_{h_\epsilon},\bar{\nabla}h_{\epsilon}(x_\epsilon)}^{q-1}(y) dy,
		\end{equation}
		where $h_\epsilon,x_\epsilon,\alpha,\beta,\gamma,N_\epsilon$ are as in Section 3. And we already know $H_\epsilon$ is a generalized solution to equation \eqref{MAeq} with $f$ replaced by $f_\epsilon.$ From the analysis of $f_\epsilon$ in Section 3, we know $f_\epsilon$ satisfies the condition \eqref{f}. Hence, by Theorem \ref{B}, we obtain a variational solution $h_0$, which is also a generalized solution to equation \eqref{MAeq} with $f$ replaced by $f_\epsilon,$ and the $q$-th chord integral of its corresponding convex body has a uniform positive bound from below. That is
		$$
		I_q(K_{h_0})\geqslant c>0,
		$$
		where $c$ depends only on $n,p,q,\alpha,\beta$ and $\gamma$ and is independent with $\epsilon.$\\
		However, by Remark \ref{H chord}, we can choose $\epsilon$ small enough such that 
		$$
		I_q(K_{H_\epsilon})<\frac{c}{2},
		$$
		here $c$ is the same as above.
		Hence, $H_\epsilon$ and $h_0$ are different solutions to equation \eqref{MAeq}.\\
		Therefore, the proof of Theorem \ref{A} is completed.
	\end{proof}
	\vskip 1.0cm

\end{document}